\theoremstyle{plain}
\theoremstyle{definition}
\newtheorem{theorem}{Theorem}[section]
\newtheorem{remark}[theorem]{Remark}
\newtheorem{lemma}[theorem]{Lemma}
\newtheorem{example}[theorem]{Example}
\newtheorem{proposition}[theorem]{Proposition}
\newtheorem{corollary}[theorem]{Corollary}
\DeclareMathAlphabet{\mathpzc}{OT1}{pzc}{m}{it}
\begin{document}

\title{Greene--Kleitman invariants for Sulzgruber insertion}

\date{}
\author{Alexander Garver}
\address{Laboratoire de Combinatoire et d'Informatique Math\'ematique, Universit\'e du Qu\'ebec \`a Montr\'eal}
\email{alexander.garver@lacim.ca}

\author{Rebecca Patrias}
\address{Laboratoire de Combinatoire et d'Informatique Math\'ematique,
Universit\'e du Qu\'ebec \`a Montr\'eal}
\email{patriasr@lacim.ca}

\begin{abstract}
R. Sulzgruber's rim hook insertion and the Hillman--Grassl correspondence are two distinct bijections between the reverse plane partitions of a fixed partition shape and multisets of rim-hooks of the same partition shape. It is known that Hillman--Grassl may be equivalently defined using the Robinson--Schensted--Knuth correspondence, and we show the analogous result for Sulzgruber's insertion. We refer to our description of Sulzgruber's insertion as diagonal RSK. As a consequence of this equivalence, we show that Sulzgruber's map from multisets of rim hooks to reverse plane partitions can be expressed in terms of Greene--Kleitman invariants.
\end{abstract}
\maketitle
\tableofcontents

\section{Introduction}

Reverse plane partitions are prominent combinatorial objects with connections to areas like symmetric functions and representation theory (see for example \cite{stanley1971theory}). Their generating function was discovered by Stanley and is as follows, where $h(u)$ denotes the hook length of the cell $u$ in partition $\lambda$ and $|\pi|$ denotes the sum of the entries in reverse plane partition $\pi$.
\begin{theorem}\cite{stanley1971theory}\label{thm:genfunction}
The generating function for reverse plane partitions of shape $\lambda$ with respect to the sum of its entries is 
\[\sum_{\pi}q^{|\pi|}=\prod_{u\in\lambda}\frac{1}{1-q^{h(u)}}.\]
\end{theorem}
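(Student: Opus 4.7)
The plan is to establish Theorem~\ref{thm:genfunction} via a weight-preserving bijection between reverse plane partitions of shape $\lambda$ and multisets of rim hooks of $\lambda$. This is the approach implicit in the abstract, which mentions both the Hillman--Grassl correspondence and Sulzgruber's insertion as bijections of exactly this type.

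First, I would interpret the right-hand side combinatorially by expanding each factor as a geometric series:
\[
\prod_{u \in \lambda} \frac{1}{1 - q^{h(u)}} \;=\; \sum_{(n_u)_{u \in \lambda} \in \mathbb{Z}_{\geq 0}^{\lambda}} q^{\sum_{u} n_u \, h(u)}.
\]
A tuple $(n_u)$ encodes a multiset that contains the rim hook associated to the cell $u$ with multiplicity $n_u$; since each such rim hook has size $h(u)$, the right-hand side is precisely the generating function for multisets of rim hooks of $\lambda$, weighted by total size.

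Second, I would construct the Hillman--Grassl bijection $\Phi$ from RPPs to such multisets. Given an RPP $\pi$, I would locate the leftmost nonzero entry in the bottom row and trace a lattice path northeastward through $\pi$ according to a prescribed comparison rule between adjacent entries, decreasing each visited entry by $1$. This extracts a rim hook whose size equals $h(u)$, where $u$ is the endpoint cell of the path. Iterating until $\pi$ is identically zero produces the desired multiset, and weight-preservation is immediate since each extracted rim hook decreases $|\pi|$ by its own size.

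The main obstacle is verifying that $\Phi$ is a bijection. One must describe an inverse insertion algorithm and prove that it recovers $\pi$ from the associated multiset of rim hooks. The subtle point is that the forward algorithm involves choices governed by the comparison rule, and one must show these choices can be uniquely reversed in a well-defined order --- for instance, by processing the rim hooks according to a natural ordering on their associated cells. A careful case analysis on the interaction of consecutive path-tracings is what makes this go through, and it is the content of the classical Hillman--Grassl argument.
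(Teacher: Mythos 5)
First, a point of calibration: the paper does not prove Theorem~\ref{thm:genfunction} at all --- it is quoted from Stanley, and the surrounding text records that the first bijective proof is due to Hillman and Grassl and that Sulzgruber's insertion furnishes a second one. So there is no in-paper argument to compare yours against; what you have written is an outline of the Hillman--Grassl route that the paper merely cites and later reviews. Your first step is fine and complete: expanding each factor $(1-q^{h(u)})^{-1}$ geometrically identifies the right-hand side with the generating function for multisets of rim hooks of $\lambda$ weighted by total number of cells, using that the rim hook $h_{(i,j)}$ has exactly $h(i,j)$ cells.

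The gap is in the second half. The entire substance of the theorem is the claim that $\Phi$ is a weight-preserving bijection, and your proposal asserts rather than proves it: you say one must ``describe an inverse insertion algorithm'' and that ``a careful case analysis \ldots\ is what makes this go through,'' but you neither specify the inverse nor carry out the analysis. The crux --- that the scanning rule forces the extracted rim hooks to come out in a canonical monotone order on their indexing cells, so that the insertion map can re-insert them unambiguously and the two composites are the identity --- is exactly what is missing, and it is the only nontrivial part of the theorem. There are also two smaller inaccuracies: the starting cell of each path is found by scanning columns bottom-to-top and left-to-right (so it is the lowest nonzero cell of the leftmost column containing a nonzero entry, not the leftmost nonzero entry of the bottom row), and the cell $u$ indexing the extracted rim hook is determined jointly by the starting column and the ending row of the path, not by the endpoint cell alone. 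As written, the proposal is a correct plan whose decisive step is outsourced to ``the classical argument.''
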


The first bijective proof of this generating function was found by Hillman and Grassl in \cite{hillman1976reverse}. The authors give a bijection between nonnegative integer arrays of shape $\lambda$---representing multisets of rim hooks of $\lambda$---and reverse plane partitions of $\lambda$, which is now known as the \textit{Hillman--Grassl correspondence}. This correspondence has since been well studied, for example by Gansner in \cite{gansner1981hillman} and by Morales, Pak, and Panova in \cite{morales2015hook}. In particular, Gansner relates Hillman--Grassl to the Robinson--Schensted--Knuth (RSK) correspondence and shows that the correspondence has the same Greene--Kleitman invariants as RSK. 

Recently, Sulzgruber defined a new bijection between multisets of rim hooks of $\lambda$ and reverse plane partitions of $\lambda$ that takes the form of a rim hook insertion algorithm, which we call \textit{Sulzgruber insertion} \cite{sulzgruber2016building}. It is easy to check that Sulzgruber insertion is distinct from Hillman--Grassl, and so his correspondence gives an alternative bijective proof of Theorem~\ref{thm:genfunction}.

In this paper, we explicitly phrase Sulzgruber insertion in terms of RSK insertion with the goal of proving that Sulzgruber insertion can be expressed in terms of Greene-Kleitman invariants. Each diagonal of the reverse plane partition associated to a multiset of rim hooks encodes the Greene--Kleitman partition associated to a certain poset on a subset of the rim hooks--namely, the rim hooks whose support intersects the given diagonal. This idea is relevant to future work of the authors with Hugh Thomas that relates reverse plane partitions to the theory of quiver representations \cite{garver2017+}. %In particular, Sulzgruber insertion for rectangular shapes can be interpreted as a special case of a more general family of bijections for quivers of cominuscule type, and the idea of diagonal RSK is a key tool in this construction. 

The paper proceeds as follows. In Section~\ref{sec:prelim}, we set up notation and review the necessary combinatorial background related to tableaux, the RSK correspondence, and Knuth equivalence. Section~\ref{sec:Sulz} defines Sulzgruber insertion as well as its inverse. We also review the Hillman--Grassl correspondence. In Section~\ref{sec:diagonalRSK}, we define a rim hook insertion algorithm we call \textit{diagonal RSK}, and we show that diagonal RSK is equivalent to Sulzgruber insertion. Section~\ref{sec:HillmanGrasslRSK} reminds the reader how to phrase Hillman--Grassl in terms of RSK in the same spirit as our diagonal RSK and states the corresponding Greene--Kleitman invariants. We conclude by showing that Sulzgruber insertion has a Greene--Kleitman invariant in Section~\ref{sec:GK}. 
 
\section{Preliminaries}\label{sec:prelim}
\subsection{Reverse plane partitions and rim hooks}
A \textit{partition} is a finite, weakly decreasing sequence of positive integers $\lambda=(\lambda_1\geq\cdots\geq\lambda_k)$. Let $|\lambda|=\lambda_1+\cdots+\lambda_k$. To each partition, we associate a \textit{Young diagram}: a left-justified array of boxes where row sizes weakly decrease from top to bottom. Throughout this paper, we refer to both the partition and its Young diagram as $\lambda$. We count the rows of a Young diagram from top to bottom and the columns from left to right, and we refer to a box in a Young diagram by its row and column index. We let $\lambda'=(\lambda_1',\ldots,\lambda_k')$ denote the \textit{conjugate} or the \textit{transpose} of partition $\lambda$---the partition obtained by swapping the roles of the rows and columns in $\lambda$. Thus $\lambda_i'$ is the size of the $i$th column of $\lambda$.

We impose a partial ordering on the boxes of a Young diagram by defining $(i,j) \preceq (i^\prime,j^\prime)$ if $i \le i^\prime$ and $j \le j^\prime$; in other words, $(i,j) \preceq (i^\prime,j^\prime)$ if $(i,j)$ is weakly northwest of $(i^\prime,j^\prime)$. We say that a box $(i,j) \in \lambda$ is in \textit{diagonal} $d$ of $\lambda$ if $d = j - i$. With respect to the partial order $\prec$, each diagonal $d$ of $\lambda$ has a unique maximal element $\textbf{r}_d = (i,j)$, the southeasternmost entry in the diagonal. We refer to $\textbf{r}_d$ as a \textit{border box} of $\lambda$.

A \textit{rim hook} of $\lambda$ is a connected strip of border boxes in $\lambda$. To each box $(i,j)$ of $\lambda$, we associate the unique rim hook of $\lambda$ that has its southwesternmost box in column $j$ and northeasternmost box in row $i$. We denote the rim hook associated to box $(i,j)$ of fixed shape $\lambda$ by $h_{(i,j)}$.

A \textit{reverse plane partition} of shape $\lambda$ is an order-preserving map $\lambda\to\{0,1,2,\ldots\}$, i.e., a filling of the boxes of a Young diagram with nonnegative integers such that entries weakly increase across rows and down columns. An $\mathbb{N}$-\textit{tableau} is a filling of shape $\lambda$ with nonnegative integers. A \textit{semistandard Young tableau} of shape $\lambda$ is a filling of the boxes of $\lambda$ with positive integers such that entries weakly increase across rows and strictly increase down columns. A \textit{standard Young tableau} is a semistandard Young tableau where the entries $1,2,\ldots,|\lambda|$ each appear exactly once. See Figure~\ref{fig:rimhookrppexample}.

\begin{figure}[h!]
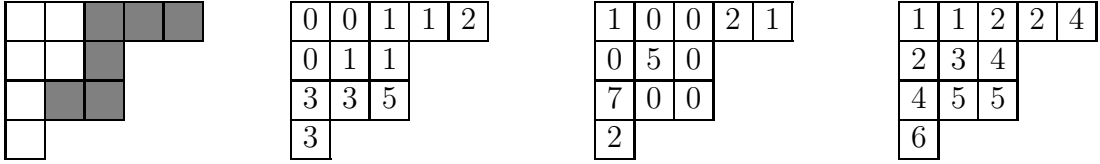

\ytableausetup{boxsize=.2in}
\begin{center}
\begin{ytableau}
$ $ & & *(gray) & *(gray) & *(gray) \\
$ $ & & *(gray) \\
$ $ & *(gray) & *(gray)\\
$ $
\end{ytableau}\hspace{.4in}
\begin{ytableau}
0 & 0 & 1 & 1 & 2 \\
0 & 1 & 1 \\
3 & 3 & 5\\
3
\end{ytableau}\hspace{.5in}
\begin{ytableau}
1 & 0 & 0 & 2 & 1\\
0 & 5 & 0 \\
7 & 0 & 0\\
2
\end{ytableau}\hspace{.5in}
\begin{ytableau}
1 & 1 & 2 & 2 & 4\\
2 & 3 & 4 \\
4 & 5 & 5\\
6
\end{ytableau}
\end{center}
\caption{From left to right, we have the rim hook $h_{(1,2)}$ inside shape $\lambda=(5,3,3,1)$, a reverse plane partition of shape $\lambda$, an $\mathbb{N}$-tableau of shape $\lambda$, and a semistandard Young tableau of shape $\lambda$.}\label{fig:rimhookrppexample}
\end{figure}

\subsection{RSK insertion}
There is a well-known bijective correspondence between words in the alphabet of positive integers and pairs consisting of a  
semistandard Young tableau and a standard Young tableau of the same shape called the Robinson--Schensted--Knuth (RSK) correspondence. 
We briefly review the correspondence here and refer the reader to \cite{stanley1999enumerative} for more information.

Given a word $w = w_1w_2 \cdots w_r$ in the positive integers, the RSK correspondence maps $w$ to a pair of tableaux via a row insertion algorithm consisting of inserting a positive 
integer into a tableau. The algorithm for inserting positive integer $k$ into a row of a semistandard tableau is as follows. If $k$ is 
greater than or equal to all entries in the row, add a box labeled $k$ to the end of the row. Otherwise, find the first $y$ in the row 
with $y>k$. Replace $y$ with $k$ in this box, and proceed to insert $y$ into the next row. To insert $k$ into semistandard tableau $P$, 
we start by inserting $k$ into the first row of $P$. To create the \textit{insertion tableau} of a word $w=w_1w_2\cdots w_r$, we 
first insert $w_1$ into the empty tableau, insert $w_2$ into the result of the previous insertion, insert $w_3$ into the result of the 
previous insertion, and so on until we have inserted all letters of $w$. We denote the resulting insertion tableau by $P(w)$. The insertion 
tableau will always be a semistandard tableau.

To obtain a standard Young tableau from $w$, we define the \textit{recording tableau}, $Q(w)$, of $w$ to be the tableau with the same shape as $P(w)$ and label $s$ in the box added to $P(w_1\cdots w_{s-1})$ during the insertion of $w_s$. For example, $w=14252$ has insertion tableau and recording tableau

\begin{center}
$P(w)=$
 \begin{ytableau}
  1 & 2 & 2 \\
  4 & 5
 \end{ytableau}\hspace{1in}
 $Q(w)=$
 \begin{ytableau}
  1 & 2 & 4 \\
  3 & 5
 \end{ytableau}\ .
\end{center}

Given a pair $(P,Q)$ of a semistandard and standard Young tableau of the same shape, one can easily reverse the procedure to recover $w$ by using $Q$ to determine which entry of $P$ to reverse insert at each step.

We next discuss a few important properties of RSK insertion that we will use later. The first is known as the Greene--Kleitman invariant.

\begin{theorem}[\cite{greene1976structure}]\label{thm:GreeneKleitman} Let $\delta_k(w)$ denote the largest cardinality of a disjoint union of $k$ nondecreasing  subwords of the word $w$ with $\delta_0(w)=0$, and let $\gamma_k(w)$ denote the largest cardinality of a disjoint union of $k$ strictly decreasing subwords of $w$ with $\gamma_0(w)=0$. Then $P(w)$ has shape $(\lambda_1,\ldots,\lambda_k)$ with 
\begin{eqnarray*}
\lambda_i&=&\delta_i(w)-\delta_{i-1}(w)\text{   and }\\
\lambda'_i&=&\gamma_i(w)-\gamma_{i-1}(w)
\end{eqnarray*} for all $i=1,\ldots,k$.
\end{theorem}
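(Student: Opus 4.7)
The strategy is to reduce to reading words of semistandard tableaux via Knuth equivalence. The plan has three parts: (i) show that each elementary Knuth move preserves the statistics $\delta_k(w)$ and $\gamma_k(w)$; (ii) invoke the classical fact that every word is Knuth equivalent to the row reading word of its insertion tableau, and that $P(w)$ itself is a Knuth invariant; and (iii) verify the theorem directly on reading words of semistandard tableaux.

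For part (i), I would perform a case analysis on the two types of elementary Knuth moves, $\ldots xzy \ldots \leftrightarrow \ldots zxy \ldots$ with $x \le y < z$ and $\ldots yxz \ldots \leftrightarrow \ldots yzx \ldots$ with $x < y \le z$. For each move, given an optimal collection of $k$ disjoint nondecreasing (respectively strictly decreasing) subwords of one word, I would exhibit an explicit local re-routing near the three involved positions that produces a collection of the same total size for the Knuth-adjacent word. This gives $\delta_k(w)=\delta_k(w')$ and $\gamma_k(w)=\gamma_k(w')$ whenever $w$ and $w'$ are Knuth-adjacent, and iterating yields invariance under the full equivalence.

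For parts (ii) and (iii), fix $T=P(w)$ of shape $\lambda$, and let $w_T$ be its row reading word, read left-to-right from the bottom row to the top. By Knuth invariance it suffices to verify the theorem for $w_T$. The top $k$ rows of $T$ are themselves $k$ disjoint nondecreasing subwords of $w_T$ with total length $\lambda_1+\cdots+\lambda_k$, and dually the leftmost $k$ columns (now appearing in $w_T$ in bottom-to-top order) give $k$ disjoint strictly decreasing subwords of total length $\lambda'_1+\cdots+\lambda'_k$, yielding both lower bounds. For the upper bound on $\delta_k(w_T)$, I would observe that within any column of $T$, consecutive entries appear in $w_T$ at decreasing positions but with strictly increasing values, so a single nondecreasing subword contains at most one entry per column; hence a union of $k$ disjoint nondecreasing subwords contains at most $\min(k,\lambda'_j)$ entries in column $j$, and summing over $j$ gives exactly $\lambda_1+\cdots+\lambda_k$ by the standard conjugation identity for partitions. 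The upper bound on $\gamma_k(w_T)$ is dual, using that a strictly decreasing subword contains at most one entry from each row.

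The main obstacle is the local re-routing in part (i): although each elementary move touches only three positions, the analysis splits according to how those three letters are distributed among the chains of an optimal decomposition (same chain, two chains, or three chains), and the strictly decreasing case is slightly more delicate than the nondecreasing case because of the asymmetry between the weak and strict inequalities built into the two Knuth moves. Once Knuth invariance of $\delta_k$ and $\gamma_k$ is established, the rest of the argument is essentially a direct verification on reading words of semistandard tableaux.
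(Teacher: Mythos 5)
The paper does not actually prove this statement: it is quoted verbatim as a classical theorem of Greene (cited as \cite{greene1976structure}) and used as a black box, so there is no in-paper argument to measure your proposal against. Your sketch is the standard textbook route to Greene's theorem (as in Fulton's \emph{Young Tableaux}): show $\delta_k$ and $\gamma_k$ are invariant under elementary Knuth moves, reduce to the reading word $r(P(w))$ via Knuth equivalence (the paper's Theorem~\ref{thm:insertionword}), and verify the identities directly on reading words of semistandard tableaux. Parts (ii) and (iii) of your plan are correct and essentially complete as written: the first $k$ rows give the lower bound for $\delta_k$, the first $k$ columns (which appear bottom-to-top in the reading word) give the lower bound for $\gamma_k$, a nondecreasing subword meets each column at most once (two cells in one column occur in the word with the strictly larger value first), a strictly decreasing subword meets each row at most once, and $\sum_j \min(k,\lambda'_j)=\lambda_1+\cdots+\lambda_k$ closes the gap between the bounds.

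The one genuine gap is the one you flag yourself: part (i) is asserted rather than proved, and it is where all the work lives. The case analysis is not merely a local bookkeeping exercise, because a Knuth move can destroy a chain outright: for instance in $xzy \equiv zxy$ with $x\le y<z$, a nondecreasing chain may use both $x$ and $z$ in $xzy$, but no nondecreasing chain of $zxy$ can use both; repairing this requires exchanging tails with the (possibly different) chain through $y$, and one must check the spliced chains are still nondecreasing and still disjoint. The same issue arises, with the inequalities reversed, for $\gamma_k$. This is all doable and standard, but until that re-routing is written out and checked in each configuration, the argument is a correct plan rather than a proof.
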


We will also need the following easy lemma, which follows from the fact that larger numbers never bump smaller numbers in RSK insertion. Let $w|_{[1,k]}$ denote the restriction of the word $w$ to the alphabet $[1,k]$ and $P_{[1,k]}$ denote the restriction of semistandard Young tableau $P$ to boxes containing $[1,k]$.

\begin{lemma}\label{lem:restriction}
We have that $P(w|_{[1,k]})=P(w)_{[1,k]}.$
\end{lemma}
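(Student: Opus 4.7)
The plan is to prove the equality by induction on the length $r$ of $w$. The base case $r=0$ is immediate, since both sides are the empty tableau. For the inductive step, write $w = w' w_r$ with $w' = w_1 \cdots w_{r-1}$, and split on whether $w_r > k$ or $w_r \leq k$.

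In the easy case $w_r > k$, I would observe that $w|_{[1,k]} = w'|_{[1,k]}$, so the inductive hypothesis gives $P(w|_{[1,k]}) = P(w')_{[1,k]}$, and it remains to show $P(w')_{[1,k]} = P(w)_{[1,k]}$. This follows because the RSK bumping chain initiated by inserting $w_r$ into $P(w')$ has strictly increasing values: inserting $w_r$ either appends $w_r$ at the end of row $1$ or bumps some $y_1 > w_r > k$, which then bumps $y_2 > y_1$, and so on. Every box that is altered or newly added therefore ends up containing a value $> k$, so the subtableau of entries $\leq k$ is unchanged.

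In the remaining case $w_r \leq k$, set $T = P(w')$; by the inductive hypothesis $T_{[1,k]} = P(w'|_{[1,k]})$, and the goal reduces to showing that $(T \leftarrow w_r)_{[1,k]} = T_{[1,k]} \leftarrow w_r$. I would run the two bumping chains in parallel, row by row. Because each row of $T$ is weakly increasing, the entries of row $i$ that are $\leq k$ form a prefix, which is precisely row $i$ of $T_{[1,k]}$. As long as the value $v \leq k$ being inserted into row $i$ bumps an entry of $T$ that is itself $\leq k$, that entry is the leftmost one exceeding $v$ in both rows, so the two chains continue identically. The critical step is the first row $i^*$ (if any) in which $v$ bumps in $T$ some entry $y > k$ at a position $j$. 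All entries of $T$'s row $i^*$ strictly to the right of $j$ are then also $> k$ by weak increase, so position $j$ lies exactly one past the end of $T_{[1,k]}$'s row $i^*$; the insertion of $v$ into $T_{[1,k]}$ therefore appends $v$ at position $j$, which matches, after restriction to $[1,k]$, the effect of replacing $y$ by $v$ in $T$. From row $i^*+1$ onward the bumping chain in $T$ carries only values $> k$ and so does not touch the restriction, while the chain in $T_{[1,k]}$ has already terminated.

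The main obstacle is the careful bookkeeping at this transition row $i^*$, in particular verifying that appending $v$ at the end of $T_{[1,k]}$'s row matches, after restriction, overwriting $y$ by $v$ in $T$'s row; the remaining verifications are routine consequences of the strict monotonicity of RSK bumping chains.
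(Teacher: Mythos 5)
Your proof is correct, and it is an elaboration of exactly the idea the paper invokes: the paper gives no written proof, merely noting that the lemma ``follows from the fact that larger numbers never bump smaller numbers in RSK insertion,'' which is precisely the monotonicity of bumping chains that drives both of your cases. Your induction on the length of $w$, with the careful analysis at the transition row $i^*$, is a complete and valid way of carrying out that one-line argument.
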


Next, we define the \textit{bumping path} of RSK insertion of a positive integer into a semistandard Young tableau $T$ to be the set of cells of $T$ that are added or whose entries change during the insertion process. We order the list from top to bottom row. For example, the bumping path for the insertion of 1 into $P(14252)$ shown above is $(1,2)$, $(2,1)$, $(3,1)$. Let $(T\leftarrow i)$ denote the result of inserting $i$ into semistandard Young tableau $T$. The following lemma can be found, for example, in \cite{stanley1999enumerative}.

\begin{lemma}\label{lem:bumpsmoveleft}
\begin{enumerate}
\item[$(a)$] When we insert positive integer $k$ into semistandard Young tableau $T$, the bumping path moves weakly left.
\item[$(b)$] Let $T$ be a semistandard Young tableau and let $i\leq j$. Then the bumping path of $i$ in $(T\leftarrow i)$ lies strictly to the left of the bumping path of $j$ in $((T\leftarrow i) \leftarrow j)$.
\end{enumerate}

\end{lemma}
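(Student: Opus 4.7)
\medskip

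\noindent\textbf{Proof plan.} Both parts follow from a single row-by-row bookkeeping argument tracking what is bumped and into what column. I would organize the proof as an induction on rows, with the key observation being that the column-strict condition of a SSYT forces the bumping to move (weakly) left, while the row-weakly-increasing condition forces a larger insertion to bump in a strictly later column.

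For part $(a)$, suppose the insertion of $k$ into row $r$ bumps the entry $y$ located in column $c$. By definition $c$ is the leftmost column of row $r$ whose entry exceeds $k$, so every entry in columns $1,\dots,c-1$ of row $r$ is at most $k<y$. Now $y$ is inserted into row $r+1$. Because $T$ is column-strict we have $T_{r+1,c}>T_{r,c}=y$, so row $r+1$ already contains an entry strictly greater than $y$ no later than column $c$. Hence the bumping in row $r+1$ occurs in some column $c'\le c$. Iterating gives the claim.

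For part $(b)$, write $c_r$ (resp.\ $c'_r$) for the column in row $r$ touched by the bumping path of the insertion of $i$ (resp.\ $j$), whenever that path reaches row $r$. The base case $r=1$ is direct: after inserting $i$, the entry at $(1,c_1)$ equals $i$ and everything to its left is $\le i\le j$, so the leftmost entry of row $1$ exceeding $j$ sits in some column $c'_1>c_1$ (the boundary case where $i$ is appended is handled identically since then every entry of row $1$ is $\le i\le j$, forcing $j$ to also be appended one column further right). For the inductive step, assume $c_r<c'_r$ and that both paths continue into row $r+1$; let $x$ and $x'$ be the values bumped out of row $r$ by the $i$- and $j$-insertions respectively. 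Because $c'_r\ne c_r$, the $i$-insertion left the entry at $(r,c'_r)$ untouched, so $x'$ equals the original entry at $(r,c'_r)$, which by the row-weakly-increasing property of row $r$ is $\ge$ the original entry at $(r,c_r)$, namely $x$. Applying the base-case argument to row $r+1$ with $x\le x'$ in place of $i\le j$ yields $c_{r+1}<c'_{r+1}$, completing the induction. A brief remark handles the subtle point that $j$'s path never outlasts $i$'s (if $i$'s bumped value is appended in some row, the at-least-as-large value coming from $j$ is appended in the same row).

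The argument is essentially routine; the only mild obstacle is being careful about the state of the tableau at each stage, since the two insertions act on different tableaux. The strict inequality $c_r<c'_r$ in the inductive hypothesis is exactly what guarantees that $i$'s activity in row $r$ does not disturb the cell in column $c'_r$, and this is precisely what lets the inductive step go through.
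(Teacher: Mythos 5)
Your argument is correct and is exactly the standard proof of the Row Bumping Lemma; the paper itself gives no proof of this statement, deferring to \cite{stanley1999enumerative}, where the same column-strictness and row-monotonicity bookkeeping appears. Nothing further is needed.
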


\subsection{Knuth equivalence}
We recall the Knuth equivalence relations on words in the alphabet of positive integers. The \textit{elementary Knuth relations} are given below, where $x,y,z$ are positive integers:
\begin{eqnarray*}
 yzx \equiv yxz &\text{ whenever }& x < y \leq z \\
 xzy \equiv zxy &\text{ whenever }& x \leq y < z.
\end{eqnarray*}

We then say that two words are \textit{Knuth equivalent} if one can be obtained from the other by a finite sequence of the elementary Knuth relations. For example, $221343\equiv 241233$ because of the sequence below, where the underlined triplets show to which three letters the equivalence was applied:
\[\underline{221}343 \equiv 212\underline{343} \equiv 21\underline{243}3 \equiv \underline{214}233 \equiv 241233.\]
One of the main motivations for Knuth equivalence is the connection to RSK insertion.

\begin{theorem}[\cite{knuth1970permutations}] Let $u$ and $v$ be two words. Then $P(u)=P(v)$ if and only if $u\equiv v$.
\end{theorem}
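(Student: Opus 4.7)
The plan is to prove both implications separately.

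For the forward direction, $u \equiv v$ implies $P(u) = P(v)$, it suffices by transitivity to handle a single elementary Knuth move applied to three consecutive letters of $u$, say in positions $i, i+1, i+2$. The insertions of positions $1, \ldots, i-1$ produce the same intermediate tableau $T$, and the insertions of positions $i+3, \ldots$ agree provided the tableau after the triplet does. So the problem reduces to the following local claim: for any semistandard Young tableau $T$ and $x,y,z$ satisfying either $x<y\le z$ or $x\le y<z$, the two Knuth-equivalent orderings of $\{x,y,z\}$ yield the same tableau when successively inserted into $T$. I would verify this by case analysis on where the first letter lands in row $1$ of $T$, using Lemma~\ref{lem:bumpsmoveleft} to control the relative positions of the bumping paths, and invoking induction on the number of rows of $T$ for the entries bumped into row~$2$.

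For the reverse direction, the strategy is to canonicalize each Knuth class. Define the \emph{row word} $w_T$ of a semistandard tableau $T$ as the concatenation of its rows read from the bottom row upward, left to right within each row. A direct check using the insertion algorithm confirms $P(w_T) = T$. The heart of the argument is the claim that every word $w$ satisfies $w \equiv w_{P(w)}$; granted this, if $P(u)=P(v)=T$ then $u \equiv w_T \equiv v$.

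The claim $w \equiv w_{P(w)}$ is proved by induction on $|w|$. Writing $w = w'k$ and applying the inductive hypothesis to $w'$, the essential step becomes
\[ w_T \cdot k \;\equiv\; w_{T \leftarrow k} \]
for an arbitrary semistandard tableau $T$ and positive integer $k$. If $k$ does not bump any entry of row $1$, the equivalence is an equality. Otherwise $k$ bumps some entry $y$ from row $1$, and I would exhibit an explicit sequence of Knuth moves: first apply instances of $yzx\equiv yxz$ to march $k$ leftward past the row-$1$ entries strictly greater than $k$, then apply instances of $xzy\equiv zxy$ to extract $y$ to the left of the row-$1$ portion of the word. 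This reduces the problem to inserting $y$ into the tableau obtained from $T$ by deleting its first row, which has one fewer row, so a secondary induction on the number of rows of $T$ closes the argument.

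The main obstacle is verifying at each Knuth move that the required strict inequality in the defining condition ($x<y\le z$ or $x\le y<z$) actually holds, using only the weak-increase property of row $1$ of $T$ together with the RSK bumping rule $a_j \le k < y$ that determines which entry $y$ gets bumped. The cleanest organization is to dispatch the single-row manipulation first and then chain the result across rows via the row-by-row induction.
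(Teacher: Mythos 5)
The paper does not prove this statement at all: it is quoted as a known result of Knuth, with a citation in place of a proof, so there is no in-paper argument to compare against. Your proposal is the classical textbook proof (essentially Knuth's original argument, as organized in Fulton's \emph{Young Tableaux}), and its overall architecture is sound. The reverse direction is complete in all essentials: the canonical representative $w_{P(w)}$ is exactly the reading word $r(P(w))$ already used in the paper (Theorem~\ref{thm:insertionword}), your reduction to the single-row identity $w_T\cdot k\equiv w_{T\leftarrow k}$ is correct, and the two stages of Knuth moves you describe do go through --- marching $k$ left past $a_{j+1},\ldots,a_p$ uses $yzx\equiv yxz$ with the strict inequality $k<a_{j}\le a_{j+1}\le\cdots$, and extracting the bumped letter $y=a_j$ to the front uses $xzy\equiv zxy$ with $a_{j-1}\le k<a_j$; the row-by-row induction then closes as you say.

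The one place where your write-up is a plan rather than a proof is the forward direction. The local claim --- that for $x<y\le z$ one has $(T\leftarrow y\leftarrow z\leftarrow x)=(T\leftarrow y\leftarrow x\leftarrow z)$, and similarly for the other relation --- is true, but it is where essentially all of the labor in Knuth's theorem lives, and ``case analysis on where the first letter lands in row $1$'' understates the subtlety: after the three insertions interact with row $1$, the two orderings can bump \emph{different pairs} of entries into row $2$, or the same pair in different orders, and the inductive step must show that the two resulting insertion sequences into the lower tableau are again either identical or related by an elementary Knuth move of the same type. Lemma~\ref{lem:bumpsmoveleft} helps control the bumping paths but does not by itself deliver this. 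So: correct strategy, reverse direction essentially complete, forward direction asserted but not established. (A slicker alternative for the forward direction, in the spirit of this paper, is to observe that elementary Knuth moves preserve the Greene--Kleitman statistics $\delta_k$ of every restriction $w|_{[1,m]}$, which by Theorem~\ref{thm:GreeneKleitman} and Lemma~\ref{lem:restriction} pins down every $P(w)_{[1,m]}$ and hence $P(w)$ itself; this trades the case analysis for Greene's theorem.)
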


We define the \textit{reading word} of a tableau $T$, $r(T)$, to be the word obtained by reading the entries of $T$ row by row from left to right starting with the bottom row. For example, $r(T)=645523411224$ for the rightmost tableau $T$ in Figure~\ref{fig:rimhookrppexample}. The following fact is well known, and the interested reader may find a proof in \cite{stanley1999enumerative}.

\begin{theorem}\label{thm:insertionword}
For any word $w$, $r(P(w))\equiv w$.
\end{theorem}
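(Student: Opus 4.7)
The plan is to induct on the length of $w$. The empty word is vacuous, so assume $w = w' a$ for a letter $a$ and a shorter word $w'$ with $r(P(w')) \equiv w'$ by the inductive hypothesis. Since Knuth equivalence is preserved under right-concatenation by a common letter, it suffices to prove the following key lemma: for every semistandard Young tableau $T$ and every positive integer $a$,
\[ r(T \leftarrow a) \equiv r(T) \cdot a. \]
Once this is established, applying it with $T = P(w')$ and combining with the inductive hypothesis yields $r(P(w)) = r(P(w') \leftarrow a) \equiv r(P(w')) \cdot a \equiv w' a = w$.

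To prove the key lemma, I would induct on the number of rows of $T$, the empty tableau being trivial. Let $R_1$ be the top row of $T$ and let $T^\ast$ be the tableau obtained by removing $R_1$. If $a$ is appended to $R_1$, then the two words in the lemma are literally equal. Otherwise $a$ bumps some entry $y$ at position $j$ of $R_1$, producing a modified top row $R_1'$ (with $a$ in place of $y$) and a recursive insertion of $y$ into $T^\ast$. Then $r(T \leftarrow a) = r(T^\ast \leftarrow y) \cdot R_1'$ and $r(T) \cdot a = r(T^\ast) \cdot R_1 \cdot a$, and the inductive hypothesis gives $r(T^\ast \leftarrow y) \equiv r(T^\ast) \cdot y$. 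Since every elementary Knuth move affects only three consecutive letters, Knuth equivalence is preserved by appending a common prefix, so the lemma reduces to the local subclaim $y \cdot R_1' \equiv R_1 \cdot a$.

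I would prove this subclaim by two passes of elementary Knuth moves. Write $R_1 = b_1 \cdots b_{j-1}\, y\, c_1 \cdots c_{m-j}$, so that $b_1 \leq \cdots \leq b_{j-1} \leq a < y \leq c_1 \leq \cdots \leq c_{m-j}$. Starting from $R_1 \cdot a$, I apply the relation $yzx \equiv yxz$ repeatedly to migrate $a$ leftward past $c_{m-j}, c_{m-j-1}, \ldots, c_1$; the inequalities $a < c_{i-1} \leq c_i$ (and $a < y \leq c_1$ at the boundary) make the relation applicable at each step and produce $b_1 \cdots b_{j-1}\, y\, a\, c_1 \cdots c_{m-j}$. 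Then I apply $xzy \equiv zxy$ repeatedly to migrate $y$ leftward past $b_{j-1}, \ldots, b_1$; the inequalities $b_{i-1} \leq b_i < y$ and $b_{j-1} \leq a < y$ make this applicable at each step, yielding $y \cdot b_1 \cdots b_{j-1}\, a\, c_1 \cdots c_{m-j} = y\, R_1'$, as required.

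The main obstacle is this subclaim $y \cdot R_1' \equiv R_1 \cdot a$: it is the only place where Knuth relations are actually invoked, and one must track carefully which elementary relation applies at each stage of the two migrations, including the boundary triples that straddle the blocks $b_1 \cdots b_{j-1}$, $y$, and $c_1 \cdots c_{m-j}$. The inductive scaffolding around it and the reduction from the theorem to the key lemma are otherwise formal.
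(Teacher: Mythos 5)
Your proof is correct and is precisely the standard argument for this well-known fact; the paper itself gives no proof and simply cites \cite{stanley1999enumerative}, where essentially the same row-by-row induction with the local subclaim $y\,R_1'\equiv R_1\,a$ appears. The only cosmetic point is that your reduction uses invariance of Knuth equivalence under appending a common \emph{suffix} (the modified top row $R_1'$) as well as a common prefix, both of which hold because the elementary relations are local.
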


We will use Theorem~\ref{thm:insertionword} in Section~\ref{sec:diagonalRSK} in the following way. Suppose we have a word $w$ on the set of positive integers $a_1<\ldots <a_n$. In addition, suppose that $w|_{[a_k,a_n]}$ is weakly increasing for some $k\in[1,n]$. It follows from the definition of the Knuth relations that $u|_{[a_k,a_n]}$ will be weakly increasing for any $u\equiv w$. In particular, $r(P(w))|_{[a_k,a_n]}$ will be weakly increasing. This implies that the boxes of $P(w)$ with entries in $[a_k,a_n]$ form a horizontal strip (i.e., no two of the boxes lie in the same column). We state this as a lemma.

\begin{lemma}\label{lem:horizontal strip}
Suppose $w$ is a word on positive integers $a_1<\cdots <a_k<\cdots< a_n$ and $w|_{[a_k,a_n]}$ is weakly increasing. Then the boxes of $P(w)$ containing $a_k,\ldots,a_n$ form a horizontal strip.
\end{lemma}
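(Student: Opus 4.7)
The plan is to prove the lemma by showing that the reading word $r(P(w))$ inherits the monotonicity property of $w$ on the restriction to $[a_k, a_n]$, and then translating weak monotonicity of the reading word into the horizontal-strip conclusion. Theorem~\ref{thm:insertionword} gives $r(P(w)) \equiv w$, so the whole argument reduces to two observations: (i) weak monotonicity on $[a_k, a_n]$ is preserved by each elementary Knuth relation, and (ii) if the reading word of a semistandard tableau is weakly increasing on $[a_k, a_n]$, then the cells containing $[a_k, a_n]$ form a horizontal strip.

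For (i), I would check each of the two elementary Knuth moves directly. Suppose $u \to u'$ is a single application of $yzx \equiv yxz$ with $x < y \leq z$, and $u|_{[a_k,a_n]}$ is weakly increasing. If $x$ lay in $[a_k, a_n]$, then so would $y$ and $z$ (since $x < y \leq z$), and the restriction of $u$ would contain the subword $yzx$, which is not weakly increasing because $x < y$. Hence $x < a_k$, so $x$ is invisible to the restriction, and $u|_{[a_k,a_n]} = u'|_{[a_k,a_n]}$. The symmetric argument handles $xzy \equiv zxy$ with $x \leq y < z$: if $z \in [a_k, a_n]$ then $z$ is followed in $u$ by $y$ with $y < z$, giving a strict decrease in the restriction; therefore $z > a_n$ is impossible\textemdash rather $z \notin [a_k, a_n]$ from the other side, and again the restriction is unchanged. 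Iterating along any chain of elementary Knuth moves from $w$ to $r(P(w))$, I conclude that $r(P(w))|_{[a_k,a_n]}$ is weakly increasing.

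For (ii), suppose two cells of $P(w)$ containing values in $[a_k, a_n]$ lie in the same column, with entries $p$ in the upper cell and $q$ in the lower cell. Since $P(w)$ is semistandard, columns are strictly increasing top to bottom, so $q > p$. The reading word reads the bottom rows first (left to right), so $q$ appears strictly before $p$ in $r(P(w))$, producing a strict decrease within the restriction to $[a_k, a_n]$. This contradicts step (i), so no column of $P(w)$ contains two entries from $[a_k, a_n]$, and these cells form a horizontal strip.

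The main obstacle is purely bookkeeping: making sure the two Knuth moves really are handled by the same kind of ``the moved letter lies outside the interval $[a_k, a_n]$'' argument, and that the reading-word convention (bottom-to-top) is used in the correct direction when reading off the contradiction in a column. Once these are set up, the lemma follows with no further computation.
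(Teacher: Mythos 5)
Your overall strategy is exactly the paper's: the paper proves this lemma in the paragraph immediately preceding its statement, by noting that the elementary Knuth relations preserve weak increasingness of the restriction to $[a_k,a_n]$, so that $r(P(w))|_{[a_k,a_n]}$ is weakly increasing by Theorem~\ref{thm:insertionword}, and then observing that this forbids two entries of $[a_k,a_n]$ from sharing a column. Your step (ii) and your treatment of the first relation $yzx\equiv yxz$ are correct and usefully fill in details the paper leaves implicit.

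However, your handling of the second relation $xzy\equiv zxy$ (with $x\leq y<z$) is wrong as written. You claim that $z\in[a_k,a_n]$ already forces a strict decrease in the restriction because $z$ is followed by $y<z$; but if $y<a_k\leq z$ then $y$ is invisible to the restriction and no decrease appears, so your conclusion ``$z\notin[a_k,a_n]$'' is false in general. For instance, take $w=132$ on the letters $1<2<3$ with interval $[3,3]$: here $x=1,\ y=2,\ z=3$, the restriction of both $132$ and $312$ is the weakly increasing word $3$, yet $z=3$ lies in the interval. The conclusion you actually need is the same one as in your first case: the \emph{smallest} letter $x$ of the triple lies below $a_k$. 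The uniform repair is this: in either elementary relation the only pair whose relative order changes is $\{x,z\}$, and since $[a_k,a_n]$ contains every letter that is at least $a_k$, having $x\geq a_k$ forces all of $x,y,z$ into the interval; each of the four words $yzx,\ yxz,\ xzy,\ zxy$ then restricts to a word containing a strict descent ($z$ before $x$, $y$ before $x$, $z$ before $y$, and $z$ before $x$, respectively), contradicting weak increasingness on either side of the relation. Hence $x<a_k$, the transposition of $x$ past $z$ is invisible to the restriction, and the restriction is unchanged. With that correction your proof is complete and coincides with the paper's.
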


\subsection{The Hillman--Grassl correspondence}
We next review the Hillman--Grassl correspondence. We will start with a reverse plane partition $\pi$ of shape $\lambda$ and obtain a multiset of rim hooks of $\lambda$.

Let $\pi_0=\pi$. Scan the columns of $\pi_0$ from bottom to top and left to right to find the cell with the first nonzero entry, and call this cell $v_0$. We inductively form a lattice path $L_0$ in $\pi$ starting at $v_0$ and taking only north and east steps as follows. Suppose we have constructed the path up to some box $v\in\lambda$. 
\begin{itemize}
\item If there is a box directly above $v$ and its entry is equal to the entry in $v$, the lattice path travels to the box directly above $v$.
\item If there is not a box directly above $v$ or its entry is strictly less than the entry in $v$ and there is a box directly to the right of $v$, the path travels to the box directly right of $v$.
\item If there is not a box directly above $v$ or its entry is strictly less than the entry in $v$ and there is not a box directly to the right of $v$, the path ends at $v$.
\end{itemize}

Next, associate a rim hook $h_0$ of $\lambda$ to $L_0$ by taking the rim hook with southwesternmost and northeasternmost cells the same as the starting and ending cells of $L_0$. Define a new reverse plane partition $\pi_1$ to be the result of subtracting 1 from each entry of $\pi_0$ that intersects $L_0$.

Continue this process until $\pi_k$ is the reverse plane partition filled with all zeros. The multiset of rim hooks of $\lambda$ that corresponds to $\pi$ under Hillman--Grassl is then $\mathcal{M}=\{h_0,\ldots,h_{k-1}\}$.

\begin{example} Starting with reverse plane partition $\pi=\pi_0$, we end with multiset $\mathcal{M}$.
\[
\pi_0=
\begin{ytableau}
0 & 0 & 1 \\
1 & 2 & 2 \\
\mathbf{3} & \mathbf{4}
\end{ytableau}\hspace{.1in}
\pi_1=
\begin{ytableau}
0 & 0 & 1 \\
1 & 2 & 2 \\
\mathbf{2} & \mathbf{3}
\end{ytableau}\hspace{.1in}
\pi_2=
\begin{ytableau}
0 & 0 & 1 \\
\mathbf{1} & \mathbf{2} & \mathbf{2} \\
\mathbf{1} & 2
\end{ytableau}\hspace{.1in}
\pi_3=
\begin{ytableau}
0 & 0 & 1 \\
0 & 1 & 1 \\
0 & \mathbf{2}
\end{ytableau}\hspace{.1in}
\pi_4=
\begin{ytableau}
0 & 0 & \mathbf{1} \\
0 & \mathbf{1} & \mathbf{1} \\
0 & \mathbf{1}
\end{ytableau}\hspace{.1in}
\pi_5=
\begin{ytableau}
0 & 0 & 0 \\
0 & 0 & 0 \\
0 & 0
\end{ytableau}\]
\[\mathcal{M}=\left\{
\raisebox{.15in}{\begin{ytableau}
 $ $ & & \\
 $ $ & & \\
 *(gray) &  *(gray)
\end{ytableau}\hspace{.15in}
\begin{ytableau}
 $ $ & & \\
 $ $ & & \\
  *(gray) &  *(gray)
\end{ytableau}\hspace{.15in}
\begin{ytableau}
 $ $ & & \\
 $ $ &  *(gray) &  *(gray)\\
  *(gray) &  *(gray)
\end{ytableau}\hspace{.15in}
\begin{ytableau}
 $ $ & & \\
 $ $ & & \\
 $ $ &  *(gray)
\end{ytableau}\hspace{.15in}
\begin{ytableau}
 $ $ & &  *(gray)\\
 $ $ &  *(gray) & *(gray) \\
 $ $ &  *(gray)
\end{ytableau}}
\right\}
\]
\end{example}

One can also easily describe the inverse algorithm, which shows that the Hillman--Grassl correspondence is a bijection between reverse plane partitions of shape $\lambda$ and multisets of rim hooks of $\lambda$.

\section{Sulzgruber insertion}\label{sec:Sulz}
In \cite{sulzgruber2016building}, Sulzgruber defines a bijection between reverse plane partitions of shape $\lambda$ and multisets of rim hooks of $\lambda$ in the form of an insertion and reverse insertion procedure. We now follow his exposition to review how to insert a rim hook $h$ into a reverse plane partition $\pi$ of shape $\lambda$. We will think of $h$ as a rigid object consisting of 3-dimensional cubes and of $\pi$ as stacks of cubes lying on top of shape $\lambda$. For further details, see \cite{sulzgruber2016building}.

First try placing $h$ on top of $\pi$ such that the cubes of $h$ are above their corresponding cells of $\lambda$. If the stacks of cubes below $h$ all have the same height, we are done. This is because placing $h$ as a rigid object on top of $\pi$ yields a reverse plane partition; there are no empty spaces within any stack of cubes. 

If the result is not a reverse plane partition, let $j$ denote the height of the shortest stack of cubes lying below $h$. Break off the segment of $h$ that can be inserted at height $j+1$. Move the remaining portion of $h$ diagonally northeast one cell and start the process over again. Note that multiple cuts and shifts may be necessary to complete this process. If the result is a reverse plane partition, then $h$ may be inserted into $\pi$, and we denote the result by $h* \pi$. If this procedure fails to produce a reverse plane partition, then $h$ may not be inserted into $\pi$.

\begin{example}
Suppose we have reverse plane partition $\pi$, and we wish to insert rim hook $h_{(1,1)}$. Placing $h_{(1,1)}$ on top of $\pi$, we see that the shortest stack of boxes below $h_{(1,1)}$ has height 1. We thus break off the segment of $h_{(1,1)}$ lying above stacks of size 1 (in this case, just the northeasternmost block of $h_{(1,1)}$), and insert this block in its place. This changes the 1 in position $(1,3)$ of $\pi$ to a 2. We now shift the remaining blocks of $h_{(1,1)}$ northeast one box. We notice that the remaining blocks of $h_{(1,1)}$ are no longer contained in $\pi$. This means the insertion has failed, and $h_{(1,1)}$ may not be inserted into $\pi.$

\begin{center}
$\pi=$
\begin{ytableau}
0 & 0 & 1 \\ 0 & 2 & 2 \\ 3 & 3 \\ 3
\end{ytableau}\hspace{.3in}
$h_{(1,1)}=$
\begin{ytableau}
$ $ & & *(gray) \\
$ $ & *(gray) &  *(gray) \\
 *(gray) &  *(gray) \\
  *(gray)
\end{ytableau}\hspace{.3in}
$h_{(1,2)}=$
\begin{ytableau}
$ $ & & *(gray) \\
$ $ & *(gray) &  *(gray) \\
$ $ &  *(gray) \\
$ $
\end{ytableau}\hspace{.3in}
$h_{(1,2)}*\pi=$
\begin{ytableau}
1 & 1 & 2 \\ 1 & 2 & 2 \\ 3 & 3 \\ 3
\end{ytableau}
\end{center}

Suppose instead we wish to insert rim hook $h_{(1,2)}$ into $\pi$. We again place $h_{(1,2)}$ on top of $\pi$, break off the northeasternmost cell, and shift the remaining three blocks of $h_{(1,2)}$ northeast one box. Each of the three blocks now lies over a stack of size 0. We can thus insert these blocks in their current position to obtain $h_{(1,2)}* \pi$. 
\end{example}

Sulzgruber defines one insertion order that has the property that given rim hooks $h_1\leq h_2\leq\ldots\leq h_k$ in the shape $\lambda$, the insertion of $h_k$ into $h_{k-1}*\cdots h_2*h_{1} *0$ always yields a reverse plane partition, where 0 denotes the reverse plane partition where each entry is 0. (Note that we have changed conventions slightly from that in \cite{sulzgruber2016building}.)  After fixing $\lambda$, the \textit{insertion order for $\lambda$} is as follows: First insert some non-negative number of copies of the rim hook $h_{(1,1)}$. Next insertion some non-negative number of copies of the rim hook $h_{(2,1)}$. Continue until you reach the bottom of the first column, and then start again from the top of the second column. Continue in this manner until you have inserted some non-negative number of copies of the rim hook corresponding to the bottom cell of the rightmost column of $\lambda$. This insertion order is illustrated for shape $\lambda=(5,4,4,3,2)$ below.

\begin{center}
\begin{ytableau}
1 & 6 & 11 & 15 & 18\\
2 & 7 & 12 & 16\\
3 & 8 & 13 & 17\\
4 & 9  & 14\\
5 & 10
\end{ytableau}
\end{center}

\begin{theorem}\cite[Theorem 3]{sulzgruber2016building}
Let $\lambda$ be a partition and $\pi$ be a reverse plane partition of shape $\lambda$. Then there exists a unique sequence $h_1,\ldots,h_s$ of rim hooks of $\lambda$ such that $h_i\leq h_{i+1}$ for $i\in[s-1]$ and $\pi=h_s*\cdots*h_2 * h_1*0$
\end{theorem}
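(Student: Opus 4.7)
The plan is to prove existence and uniqueness simultaneously by strong induction on $|\pi|=\sum_{(i,j)\in\lambda}\pi(i,j)$. The base case $|\pi|=0$ is immediate: then $\pi=0$ and the empty sequence is the unique solution, since any nonempty multiset of rim hooks would produce a reverse plane partition with strictly positive sum.

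For the inductive step, suppose $\pi\ne 0$. The strategy is to exhibit a uniquely determined rim hook $h_s$ that must appear last in any valid sequence, together with a uniquely determined reverse plane partition $\pi'$ with $\pi=h_s*\pi'$ and $|\pi'|<|\pi|$; then the inductive hypothesis applied to $\pi'$ gives a unique weakly increasing sequence $h_1\le\cdots\le h_{s-1}$ with $\pi'=h_{s-1}*\cdots*h_1*0$, and the condition $h_{s-1}\le h_s$ will follow automatically from the maximality of $h_s$ in the insertion order. To identify $h_s=h_{(i_0,j_0)}$, I would exploit the structure of the insertion order: since rim hooks are inserted column by column from left to right and, within each column, from top to bottom, no rim hook inserted before $h_s$ can affect the entries of $\pi$ strictly east of column $j_0$ beyond what is already forced by the entries in columns at most $j_0-1$. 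Thus $j_0$ should be detectable as the easternmost ``active'' column of $\pi$, and an analogous argument within column $j_0$ pins down $i_0$.

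Having identified $h_s$, I would recover $\pi'$ by reversing the cutting-and-shifting mechanism of Sulzgruber's insertion. Because $h_s$ is last in the insertion order, its cubes occupy the topmost positions of their respective stacks in $\pi$, partitioned into horizontal segments reflecting how many cuts occurred during the original insertion. I would peel these segments off starting from the northeasternmost (highest) segment and sliding each one back southwest into its original rim-hook configuration, thereby exposing $\pi'$ underneath.

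The main obstacle will be showing the extraction is well-defined and invertible: that the segmentation of $h_s$ visible in $\pi$ is uniquely determined by $\pi$, that peeling these segments off yields a genuine reverse plane partition $\pi'$, and that $h_s*\pi'$ then recovers $\pi$ exactly. This requires a careful invariant relating the landing pattern of the last-inserted rim hook to local features of $\pi$, such as the heights of the stacks immediately southwest of the support of $h_s$. Establishing this invariant is the combinatorial heart of Sulzgruber's construction; once it is in place, existence and uniqueness of $h_s$ and $\pi'$ follow, and the induction closes.
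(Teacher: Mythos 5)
Your overall strategy (induct on $|\pi|$, identify the last-inserted rim hook $h_s$, peel it off to obtain $\pi'$, recurse) has the right shape, and it matches the spirit of Sulzgruber's reverse insertion. Note, however, that the paper does not actually prove this theorem: it is cited from Sulzgruber, and the paper only records the reverse insertion algorithm in Section~\ref{sec:SulzReverse}. Measured against that algorithm, your sketch has two concrete problems.

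First, your proposed detection of $h_s=h_{(i_0,j_0)}$ via ``the easternmost active column'' cannot work. By definition $h_{(i,j)}$ runs from the bottom border box of column $j$ to the rightmost border box of row $i$, so every rim hook --- including the very first one inserted, $h_{(1,1)}$ --- has support reaching the east end of some row; moreover, during insertion the cubes of a rim hook are cut and shifted northeast, so its footprint in $\pi$ lies even further northeast than its nominal support. Consequently the entries far east of column $j_0$ are affected by essentially all of the rim hooks, and no column-based criterion isolates the last one. The correct detection is a diagonal scan starting from the northeasternmost diagonal, reading each diagonal southeast to northwest, for the first \emph{candidate} box $b$ --- a box whose entry strictly exceeds the entry to its left (and, for boxes of type $\mathcal{A}$, also the entry above it) --- followed by a lattice path out of $b$ built by the rule ``go up if the entry above equals the current entry, otherwise go right''; $h_s$ is then the rim hook with the same length and the same northeasternmost cell as this path. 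Your sketch contains neither the candidate condition nor the path rule. Second, you explicitly defer the key invariant (what you call ``the combinatorial heart of Sulzgruber's construction'') that would make the extraction well defined and show $h_s*\pi'=\pi$ with $h_{s-1}\leq h_s$. Since that invariant is precisely the content of the theorem, the proposal as written is a plan whose hardest step is left open and whose one concrete step --- locating $(i_0,j_0)$ --- is incorrect.
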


\begin{remark}One can easily check that Sulzgruber insertion is not equivalent to the Hillman--Grassl correspondence by looking at the reverse plane partition of shape $(2,2)$ with each box filled with 1. Hillman--Grassl associates this reverse plane partition with multiset $\{h_{(1,1)},h_{(2,2)}\}$ while Sulzgruber associates it with $\{h_{(2,1)}, h_{(1,2)}\}$. 
\end{remark}

\iffalse
\begin{example} Let $\pi$ be the reverse plane partition below and let $h_{(i,j)}$ denote the rim hook corresponding to cell $(i,j)$. Then $\pi = h_{(1,2)}*h_{(3,1)}*h_{(2,1)}*h_{(2,1)}*h_{(1,1)}*0$. The reverse plane partition $h_{(1,2)}*\pi$ is shown on the right. In the process of inserting $h_{(1,2)}$ into $\pi$, we first lay $h_{(1,2)}$ on top of $\pi$. The stack of cubes of $\pi$ in position $(1,3)$ is strictly shorter than any other stack below $h_{(1,2)}$, so the block of $h_{(1,2)}$ in position $(1,3)$ is inserted. The remaining three cubes are shifted one position northwest, where they may all be inserted since they all lie above a stack of size 1.
\begin{center}
$\pi=$ 
\begin{ytableau} 0 & 1 & 2 \\ 1 & 1 & 3 \\ 4 & 4 & 4 \end{ytableau}\hspace{.5in}
$h_{(1,2)}*\pi=$
\begin{ytableau} 0 & 2 & 3 \\ 2 & 2 & 3 \\ 4 & 4 & 4 \end{ytableau}
\end{center}
\end{example}
\fi

\subsection{Sulzgruber reverse insertion}\label{sec:SulzReverse}
Given a reverse plane partition $\pi=h_s*\cdots*h_2*h_1*0$ of shape $\lambda$, we may identify the rim hook $h_s$ using the reverse insertion procedure we now describe.

First, notice that the southwesternmost box of any rim hook can be one of two types of cell of $\lambda$. It is either an \textit{outer corner} of $\lambda$ (i.e., a cell $c$ such that $\lambda-c$ is still a partition), or it is a cell of $\lambda$ lying in the same row as an outer corner and at the bottom of its column. We will categorize any cell that is an outer corner or in the same diagonal as an outer corner as $\mathcal{O}$ and any cell not in $\mathcal{O}$ lying in the same row as an outer corner at the bottom of its column or any cell in the same diagonal as such a cell as $\mathcal{A}$. For example, we have the following categories.
\begin{center}
\begin{ytableau}
$ $ & & \mathcal{A} & \mathcal{A} & \mathcal{O} & \\
\mathcal{O} & & & \mathcal{A} & \mathcal{A} & \mathcal{O} \\
$ $ & \mathcal{O} & \\
\mathcal{O} & & \mathcal{O} \\
\mathcal{A} & \mathcal{O} 
\end{ytableau}
\end{center}

The reverse insertion proceeds as follows:
\begin{enumerate}
\item We scan $\pi$ diagonal by diagonal starting with the northeasternmost diagonal and reading through each diagonal southeast to northwest until we find the first box $b$ such that either
\begin{enumerate}
\item $b\in \mathcal{O}$ and the entry in $b$ is strictly greater than the entry to the left of $b$ or
\item $b\in \mathcal{A}$ and the entry in $b$ is strictly greater than the entry above $b$ and strictly greater than the entry to the left of $b$.
\end{enumerate}
If there is no box above or to the left of $b$, we consider the entry to be 0. Sulzgruber calls any box satisfying $(a)$ or $(b)$ a \textit{candidate}. Box $b$ is the first candidate in the described diagonal reading order. 

\item We inductively form a path $P$ in $\lambda$ starting at $b$ and ending at a cell on the southeast border of $\lambda$ that travels either one box right or one box up at each step  using the following rules. Suppose we have constructed the path up to some box $b'\in\lambda$.
\begin{enumerate}
\item If there is a box of $\lambda$ above $b'$ and its entry is equal to the entry in $b'$, the path travels next to the box above $b'$.
\item If there is a box of $\lambda$ to the right of $b'$ and the box above $b'$ either does not exist or has entry strictly smaller than the entry in $b'$, the path travels to the box to the right of $b'$.

\end{enumerate}
If there is neither a box above $b'$ nor to the right of $b'$, the path ends at $b'$. 
\end{enumerate}

There is then a unique rim hook of $\lambda$ with the same number of boxes as $P$ and the same northeasternmost cell as $P$; this is $h_s$, the last rim hook inserted to obtain $\pi=h_s*\cdots*h_1*0$. We obtain $h_{s-1}*\cdots*h_1*0$ by subtracting one from the entry in each box in $\pi\cap P$.

\begin{remark} This description of reverse insertion differs slightly from that of \cite{sulzgruber2016building} but is easily seen to be equivalent. The key observation is that if the entry above $b'$ is equal to the entry in $b'$, the path must move upward in order for the result of the subtraction to be a reverse plane partition.
\end{remark}

\begin{example} Performing this reverse insertion on $\pi$ below gives the path $P$ highlighted in orange. This path tells us that the last rim hook inserted to obtain $\pi$ was $h_{(1,2)}$. Continuing this process, we see that $\pi = h_{(1,2)}*h_{(3,1)}*h_{(2,1)}*h_{(2,1)}*h_{(1,1)}*0$.

\begin{center}
$\pi=$ 
\begin{ytableau} 0 & *(orange) 1 & *(orange) 2 \\ *(orange) 1 & *(orange) 1 & 3 \\ 4 & 4 & 4 \end{ytableau}{\hspace{.1in}$\longrightarrow$\hspace{.07in}}
\begin{ytableau} 0 & 0 &  1 \\ 0 & 0 &  3 \\ *(orange) 4 & *(orange) 4 & *(orange) 4 \end{ytableau}{\hspace{.1in}$\longrightarrow$\hspace{.07in}}
\begin{ytableau} 0 & 0 & 1 \\ 0 & 0 &  *(orange) 3 \\ *(orange) 3 & *(orange) 3 & *(orange) 3 \end{ytableau}{\hspace{.1in}$\longrightarrow$\hspace{.07in}}
\begin{ytableau} 0 & 0 & 1 \\ 0 & 0 & *(orange) 2 \\ *(orange) 2 & *(orange) 2 & *(orange) 2\end{ytableau}{\hspace{.1in}$\longrightarrow$\hspace{.07in}}
\begin{ytableau} 0 & 0 & *(orange) 1 \\ 0 & 0 & *(orange) 1 \\ *(orange) 1 & *(orange) 1 & *(orange) 1\end{ytableau}{\hspace{.1in}$\longrightarrow$\hspace{.07in}}
\begin{ytableau} 0 & 0 & 0 \\ 0 & 0 & 0 \\ 0 & 0 & 0\end{ytableau}
\end{center}
\end{example}

\section{Diagonal RSK}\label{sec:diagonalRSK}
We next introduce an additional bijection between multisets of rim hooks of $\lambda$ and reverse plane partitions of shape $\lambda$ that uses the RSK algorithm, which we will show is equivalent to Sulzgruber's bijection. We call this bijection \textit{diagonal RSK}. The analogue of this approach for the Hillman--Grassl bijection is described by Gansner in \cite{gansner1977matrix} and also by Morales-Pak-Panova in \cite{morales2015hook}; it is reviewed in Section~\ref{sec:HillmanGrasslRSK}. We phrase our insertion very explicitly to aid in later proofs.

We will use the same insertion order as Sulzgruber, as described in Section~\ref{sec:Sulz}. In addition, we give each rim hook a distinct positive integer label. To obtain the labels, simply fill the boxes of $\lambda=(\lambda_1,\ldots,\lambda_k)$ in increasing order such that row 1 contains exactly $1,2,\ldots,\lambda_1$, row 2 contains exactly $\lambda_1+1,\ldots,\lambda_1+\lambda_2$, etc. Now define the label for rim hook $h_{(i,j)}$ to be the label of cell $(i,j)$ of $\lambda$. For example, the rim hook shown in Figure~\ref{fig:rimhookrppexample} has label 2. 

To obtain a reverse plane partition from a multiset $\mathcal{M}$ of rim hooks of $\lambda$, first order the multiset of rim hooks to be inserted in increasing insertion order. Reading off their labels gives a word, which we call $w(\mathcal{M})$.

We assign a subword of this word to each diagonal as follows. Fix a diagonal $i$. The $j$th letter in the $w(\mathcal{M})$ is included in the subword for diagonal $i$ exactly when the corresponding rim hook has a cell in diagonal $i$. (See Example~\ref{ex:RSKlabels}.) We denote the resulting word by $w(\mathcal{M})_i$.

Next, use RSK insertion to obtain a semistandard Young tableau corresponding to each diagonal: $P(w(\mathcal{M})_i)$. To build the reverse plane partition of shape $\lambda$ corresponding to $\mathcal{M}$, record the shape of $P(w(\mathcal{M})_i)$ in diagonal $i$ by entering the size of the largest part in the southeasternmost cell and proceeding up the diagonal. Given multiset $\mathcal{M}=\{h_1\leq h_2\leq \ldots\leq h_k\}$, we denote the result of this insertion by $h_k \star\cdots\star h_2 \star h_1 \star 0$.

\begin{example}\label{ex:RSKlabels}
Suppose $\lambda=(3,2,2)$ and the multiset of rim hooks is as shown below, where they are written in increasing insertion order. The corresponding multiword is $w(\mathcal{M})=1162777$.
\begin{center}
\ytableausetup{boxsize=.2in}
\raisebox{.15in}{\begin{ytableau}
1 & 2 & 3 \\
4 & 5 \\
6 & 7
\end{ytableau}}\hspace{.05in}
$\mathcal{M}=\left\{
\raisebox{.15in}{\begin{ytableau}
 $ $ & *(gray)& *(gray)\\
 $ $ & *(gray)\\
 *(gray) & *(gray)
\end{ytableau}\hspace{.1in}
\begin{ytableau}
 $ $ & *(gray)& *(gray)\\
 $ $ & *(gray)\\
 *(gray) & *(gray)
\end{ytableau}\hspace{.1in}
\begin{ytableau}
 $ $ & & \\
 $ $ & \\
 *(gray) & *(gray)
\end{ytableau}\hspace{.1in}
\begin{ytableau}
 $ $ & *(gray)& *(gray)\\
 $ $ & *(gray)\\
$ $ & *(gray)
\end{ytableau}\hspace{.1in}
\begin{ytableau}
 $ $ & & \\
 $ $ & \\
 $ $ & *(gray)
\end{ytableau} \hspace{.1in}
\begin{ytableau}
 $ $ & & \\
 $ $ & \\
 $ $ & *(gray)
\end{ytableau}\hspace{.1in}
\begin{ytableau}
 $ $ & & \\
 $ $ & \\
 $ $ & *(gray)
\end{ytableau}}\right\}$
\end{center}
Then $w(\mathcal{M})_{-2}=116$, $w(\mathcal{M})_{-1}=1162777$, and $w(\mathcal{M})_{0}=w(\mathcal{M})_{1}=w(\mathcal{M})_{2}=112$. This gives the five semistandard Young tableaux below, which lead to the reverse plane partition shown on the right.

\begin{center}
\begin{ytableau}
1 & 1 & 6
\end{ytableau}\hspace{.15in}
\begin{ytableau}
1 & 1 & 2 & 7 & 7 & 7 \\ 6
\end{ytableau}\hspace{.15in}
\begin{ytableau}
1 & 1 & 2 
\end{ytableau}\hspace{.15in}
\begin{ytableau}
1 & 1 & 2
\end{ytableau}\hspace{.15in}
\begin{ytableau}
1 & 1 & 2
\end{ytableau}\hspace{.15in}
\begin{ytableau}
0 & 3 & 3 \\
1 & 3 \\
3 & 6
\end{ytableau}
\end{center}
\end{example}

\begin{proposition}\label{prop:RSKgivesRPP}
Suppose $h_1\leq h_2\leq \ldots\leq h_k$ are rim hooks in partition $\lambda$. Then $h_k\star \cdots \star h_2\star h_1 \star 0$ is a reverse plane partition of shape $\lambda$.
\end{proposition}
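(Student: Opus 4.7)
The plan is to induct on the number of rim hooks $k$. The base case $k = 0$ is immediate: the insertion applied to the empty multiset yields the all-zeros filling, which is a reverse plane partition.

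For the inductive step, set $\pi' = h_{k-1} \star \cdots \star h_1 \star 0$ and $\pi = h_k \star h_{k-1} \star \cdots \star h_1 \star 0$, and write $\mathcal{M} = \{h_1,\ldots,h_k\}$, $\mathcal{M}' = \{h_1,\ldots,h_{k-1}\}$, with $\ell_k$ denoting the label of $h_k$. For each diagonal $d$, the word $w(\mathcal{M})_d$ is either identical to $w(\mathcal{M}')_d$ (when $h_k$ does not meet diagonal $d$) or obtained from $w(\mathcal{M}')_d$ by appending the letter $\ell_k$ at the end (when $h_k$ meets diagonal $d$). In the latter case, appending one letter to a word adds exactly one cell to the shape of its RSK insertion tableau, so the shape $\mu^{(d)}$ of $P(w(\mathcal{M})_d)$ grows by exactly one box over the shape of $P(w(\mathcal{M}')_d)$. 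Translating back to the filling, for each $d$ intersected by $h_k$, exactly one entry of $\pi$ along diagonal $d$ is one larger than the corresponding entry of $\pi'$; all other entries in that diagonal, and all entries in diagonals not met by $h_k$, are unchanged.

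The proof thus reduces to showing that the set of incremented cells is compatible with the RPP condition: at each interface between an incremented cell and a non-incremented cell in the same row or column, the entries of $\pi'$ must already be strictly monotone so that the $+1$ is absorbed. I plan to establish this by tracking, for each diagonal $d \in D(h_k)$ (the set of diagonals $h_k$ meets), the row $a_d$ of the new cell added to $\mu^{(d)}$ when $\ell_k$ is RSK-inserted into $P(w(\mathcal{M}')_d)$. The incremented cell is then the $a_d$-th cell from the southeasternmost cell $\textbf{r}_d$ along diagonal $d$, and the goal is to argue that these cells trace out a ribbon inside $\lambda$. The two main tools are Lemma~\ref{lem:bumpsmoveleft}, which lets me compare bumping behavior between insertions of $\ell_k$ into the closely related tableaux $P(w(\mathcal{M}')_d)$ and $P(w(\mathcal{M}')_{d+1})$ (these two words differ only in the letters coming from rim hooks with an endpoint in diagonal $d$ or $d+1$), and Lemma~\ref{lem:horizontal strip}, which controls the placement of labels $\ge \ell_k$ in each $P(w(\mathcal{M}')_d)$: the labels of rim hooks coming after $h_k$ in the insertion order form a weakly increasing suffix of $w(\mathcal{M}')_d$ and therefore occupy a horizontal strip in the insertion tableau.

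The main obstacle is making the ribbon comparison precise: one must show that as $d$ shifts by $\pm 1$, the bumping row $a_d$ shifts by at most one in the correct direction, so that the incremented cells in $\pi$ move by one step along a connected ribbon fitting inside $\lambda$. This requires careful case analysis based on which rim hooks enter or leave the set of rim hooks intersecting diagonal $d$ at the boundary between $d$ and $d+1$. Once the ribbon structure is pinned down, preservation of row and column monotonicity follows by a short check at the interface between the ribbon and its complement, using strict inequalities in $\pi'$ coming from the induction hypothesis and from the horizontal-strip constraint on large labels.
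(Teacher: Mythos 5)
Your proposal takes a genuinely different route from the paper (the paper does not induct on $k$; it fixes the full multiset $\mathcal{M}$ and directly compares the insertion tableaux of adjacent diagonals via Lemmas~\ref{lem:rleft} and~\ref{lem:rbelow}), but as written it has a genuine gap: the entire content of the proof is concentrated in the step you explicitly defer. You correctly reduce the inductive step to showing that (1) the cells incremented by appending $\ell_k$ to each $w(\mathcal{M}')_d$ form a connected ribbon inside $\lambda$, and (2) at each interface where an incremented cell has a non-incremented neighbor to its right or below, the entry of $\pi'$ is already strictly smaller than that neighbor's. But you prove neither. For (1), the claim that the bumping row $a_d$ shifts by at most one as $d$ varies is exactly the hard part of the paper's later Theorem~\ref{thm:diagisSulz} (the equivalence with Sulzgruber insertion), where it is established through a delicate case analysis on whether $\mathbf{r}_d$ sits at the bottom of its column or at the end of its row, using the restriction lemma and the horizontal-strip lemma; ``the set of rim hooks intersecting $d$ versus $d+1$'' does not change in a way that Lemma~\ref{lem:bumpsmoveleft} alone controls, because when $\mathbf{r}_d$ is below $\mathbf{r}_{d+1}$ the word $w(\mathcal{M}')_{d}$ is a \emph{superword} of $w(\mathcal{M}')_{d+1}$, not an extension of it, so bumping-path comparisons between the two insertions are not immediate. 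For (2), the inductive hypothesis only gives that $\pi'$ is a reverse plane partition, which yields weak inequalities; the needed strict inequalities at the ribbon boundary require additional structural information about how $\pi'$ was built (in the paper this is the ``candidate'' analysis in Section~\ref{sec:diagonalRSK}).

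There is a second omission: you never rule out that the shape of $P(w(\mathcal{M})_d)$ has more parts than diagonal $d$ has cells, in which case the recording step is not even defined. This does not follow from the ribbon picture alone. The paper handles it with a separate argument: by Theorem~\ref{thm:GreeneKleitman} the number of rows of $P(w(\mathcal{M})_d)$ equals the minimal number of nondecreasing subwords covering $w(\mathcal{M})_d$, and the word read off the rectangle northwest of $\mathbf{r}_d=(i,j)$ is covered by both $i$ and $j$ nondecreasing sequences, hence by $\min(i,j)$ of them. Some version of this (or an inductive substitute showing $a_{d} \le \min(i,j)$ for the southwesternmost diagonal of $h_k$) must be supplied. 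In short, the skeleton is plausible and would essentially reprove Theorem~\ref{thm:diagisSulz} along the way, but the two load-bearing claims are asserted rather than proved.
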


In order to prove the previous proposition as well as the equivalence of Sulzgruber insertion and diagonal RSK, it will be helpful to examine how $w(\mathcal{M})_d$ is related to $w(\mathcal{M})_{d+1}$. We accomplish this with the following easy lemmas. Recall that $\mathbf{r}_d$ denotes the border box on diagonal $d$.

\begin{lemma}\label{lem:rleft}
Suppose $\mathbf{r}_d$ is directly left of $\mathbf{r}_{d+1}$. Then $w(\mathcal{M})_d$ is a prefix of $w(\mathcal{M})_{d+1}$.
\end{lemma}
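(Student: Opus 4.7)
The plan is to classify which rim hooks $h_{(i,j)}$ contribute a letter to $w(\mathcal{M})_d$ versus $w(\mathcal{M})_{d+1}$ purely in terms of the coordinates $(i,j)$, and then compare this classification against the Sulzgruber insertion order. Writing $\mathbf{r}_d = (r,s)$ so that $\mathbf{r}_{d+1} = (r,s+1)$, the maximality of $\mathbf{r}_d$ on its diagonal forces $(r+1,s+1)\notin\lambda$, and hence $\lambda_{r+1}\leq s$ (whenever row $r+1$ exists); meanwhile $\mathbf{r}_d,\mathbf{r}_{d+1}\in\lambda$ give $\lambda_r\geq s+1$, $\lambda'_s\geq r$, and $\lambda'_{s+1}\geq r$. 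These are the only geometric inputs I will use.

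For any rim hook $h_{(i,j)}$, the southwesternmost cell is the bottom of column $j$, namely $(\lambda'_j,j)$, and the northeasternmost cell is $(i,\lambda_i)$. Because each unit step along the border (right or up) strictly increases $j-i$, the hook $h_{(i,j)}$ meets exactly the diagonals $d'$ with $j-\lambda'_j\leq d'\leq \lambda_i-i$. Using the inequalities recorded above, I would then verify the characterization: $h_{(i,j)}$ meets diagonal $d$ iff $i\leq r$ and $j\leq s$, while $h_{(i,j)}$ meets diagonal $d+1$ iff $i\leq r$ and $j\leq s+1$. Each direction is a short inequality check; for instance, $i>r$ forces $\lambda_i\leq\lambda_{r+1}\leq s$ and hence $\lambda_i-i\leq d-1$, ruling out intersection with either diagonal, while dually $j>s+1$ together with $j-\lambda'_j\leq d+1$ forces $\lambda'_j\geq r+1$, contradicting $\lambda_{r+1}\leq s$. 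The converse directions use $\lambda_r\geq s+1$ and $\lambda'_s\geq r$ to put $(i,\lambda_i)$ weakly northeast of diagonal $d+1$ and $(\lambda'_j,j)$ weakly southwest of diagonal $d$.

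Granted this characterization, the rim hooks contributing to $w(\mathcal{M})_{d+1}$ but not to $w(\mathcal{M})_d$ are precisely the hooks $h_{(i,s+1)}$ with $i\leq r$. The Sulzgruber insertion order processes columns $1$ through $s$ in full before reaching column $s+1$, so in $w(\mathcal{M})$ every occurrence of a rim hook with $j\leq s$ precedes every occurrence of a rim hook with $j=s+1$. Since $w(\mathcal{M})_d$ and $w(\mathcal{M})_{d+1}$ both inherit their letter order from $w(\mathcal{M})$, this exhibits $w(\mathcal{M})_d$ as the initial segment of $w(\mathcal{M})_{d+1}$.

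The only step that requires any real work is the inequality-based case analysis in the second paragraph; the rest of the argument is a direct unwinding of the definition of the insertion order and of the subwords $w(\mathcal{M})_d$, so I do not expect any serious obstacle once the characterization is in hand.
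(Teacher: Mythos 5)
Your proof is correct and follows essentially the same route as the paper's: you show that every rim hook meeting diagonal $d$ also meets diagonal $d+1$, and that the extra rim hooks meeting $d+1$ (those in column $s+1$) all come later in the insertion order. The paper phrases this in one line via containment of the border boxes $\mathbf{r}_d$ and $\mathbf{r}_{d+1}$, whereas you verify the same two facts through an explicit coordinate characterization, but the underlying argument is identical.
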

\begin{proof}
Every rim hook containing $\mathbf{r}_d$ also contains $\mathbf{r}_{d+1}$, and the rim hooks containing $\mathbf{r}_{d+1}$ but not $\mathbf{r}_d$ are inserted after all of the rim hooks containing $\mathbf{r}_d$.
\end{proof}

\begin{lemma}\label{lem:rbelow}
Suppose $\mathbf{r}_d$ is directly below $\mathbf{r}_{d+1}$. Then $w(\mathcal{M})_{d+1}$ is a subword of $w(\mathcal{M})_{d}$. In particular, the restriction of $w(\mathcal{M})_{d}$ to the interval of labels of rim hooks containing $\mathbf{r}_{d+1}$ gives $w(\mathcal{M})_{d+1}$. 
\end{lemma}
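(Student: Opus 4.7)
The plan is to determine exactly which rim hooks appear in $w(\mathcal{M})_d$ and in $w(\mathcal{M})_{d+1}$, then compare their labels to see that the ``extra'' rim hooks present in the former have labels lying strictly above the label interval of the latter.

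First I set up coordinates by writing $\mathbf{r}_{d+1} = (i,j)$, so that $\mathbf{r}_d = (i+1,j)$. Because $\mathbf{r}_{d+1}$ is the southeasternmost cell on its diagonal, $(i+1,j+1) \notin \lambda$, and combined with $(i+1,j) \in \lambda$ this forces $\lambda_{i+1} = j$: the border box $\mathbf{r}_d$ sits at the end of its row. I then invoke the basic characterization of rim-hook containment: $h_{(a,b)}$ contains a border box $(r,c)$ if and only if $a \leq r$, $b \leq c$, $\lambda_a \geq c$, and $\lambda'_b \geq r$ (equivalently, both $(a,c)$ and $(r,b)$ lie in $\lambda$). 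Comparing the conditions for $\mathbf{r}_{d+1}$ and $\mathbf{r}_d$, every rim hook containing the former also contains the latter: the only new inequality needed is $\lambda'_b \geq i+1$, which follows from $b \leq j$ together with $\lambda'_b \geq \lambda'_j \geq i+1$ (using $\mathbf{r}_d \in \lambda$ and monotonicity of the conjugate partition). Hence the letters of $w(\mathcal{M})_{d+1}$ occur as a subsequence of the letters of $w(\mathcal{M})_d$ in the common insertion order, giving the first (weaker) assertion.

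It remains to identify and locate the extra rim hooks, those whose label appears in $w(\mathcal{M})_d$ but not in $w(\mathcal{M})_{d+1}$. Among the four containment inequalities, the only one that can fail for $\mathbf{r}_{d+1}$ while holding for $\mathbf{r}_d$ is $a \leq i$; this forces $a = i+1$, and the remaining inequalities are automatic from $b \leq j$, $\lambda_{i+1} = j$, and $\lambda'_b \geq \lambda'_j \geq i+1$. Therefore the extra rim hooks are precisely $h_{(i+1,b)}$ for $b \in [1,j]$. Their labels equal $\lambda_1 + \cdots + \lambda_i + b$, whereas the largest label among rim hooks containing $\mathbf{r}_{d+1}$ is attained by $h_{(i,j)}$ and equals $\lambda_1 + \cdots + \lambda_{i-1} + j$. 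Since $\lambda_i \geq j$, every extra label strictly exceeds this maximum. Consequently, restricting $w(\mathcal{M})_d$ to the interval of labels of rim hooks containing $\mathbf{r}_{d+1}$ deletes exactly these extra letters, and what remains is $w(\mathcal{M})_{d+1}$.

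The main technical point is to state and apply the rim-hook containment characterization cleanly, distinguishing the roles of the NE endpoint $(a,\lambda_a)$ and the SW endpoint $(\lambda'_b, b)$ of $h_{(a,b)}$; once this is in place the proof reduces to the single inequality $\lambda_i \geq j$, which is immediate from $(i,j) = \mathbf{r}_{d+1} \in \lambda$. A minor subtlety worth highlighting is that the labels of rim hooks containing $\mathbf{r}_{d+1}$ need not themselves form a contiguous interval of integers, but this is irrelevant: only the minimum and maximum of that label set matter, and the extra labels safely sit above the maximum.
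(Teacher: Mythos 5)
Your proof is correct and follows the same approach as the paper, which simply asserts the two key facts (every rim hook containing $\mathbf{r}_{d+1}$ contains $\mathbf{r}_d$, and the extra rim hooks containing $\mathbf{r}_d$ but not $\mathbf{r}_{d+1}$ have labels exceeding all labels of rim hooks containing $\mathbf{r}_{d+1}$) without further justification. You supply the explicit containment characterization and label computation that the paper leaves to the reader; both are verified correctly.
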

\begin{proof}
In this situation, any rim hook containing $\textbf{r}_{d+1}$ contains $\textbf{r}_d$. In addition, any rim hook that contains $\textbf{r}_{d}$ but not $\textbf{r}_{d+1}$ has label strictly greater than all of the rim hooks containing $\textbf{r}_{d+1}$. The result follows.
\end{proof}

\begin{example}
Consider $\lambda=(3,2,2)$ as in Example~\ref{ex:RSKlabels} and suppose $\mathcal{M}$ contains one copy of each rim hook of $\lambda$. Then we have $w(\mathcal{M})_{-2}=146$ and
$w(\mathcal{M})_{-1}=146257$, so we see $w(\mathcal{M})_{-2}$ as a prefix of $w(\mathcal{M})_{-1}$. We also have $w(\mathcal{M})_{0}=1425$, so we see that $w(\mathcal{M})_{-1}$ restricted to $[1,5]$ gives $w(\mathcal{M})_{0}$.
\end{example}

\begin{proof}[Proof of Proposition~\ref{prop:RSKgivesRPP}]
We first show entries are weakly increasing along rows and down columns. We do this by considering the entries of adjacent diagonals $d$ and $d+1$ of $h_k\star \cdots \star h_2\star h_1 \star 0$. First suppose $\mathbf{r}_d$ is directly left of $\mathbf{r}_{d+1}$. It follows from Lemma~\ref{lem:rleft} that the shape of $P(w(\mathcal{M})_d)$ is contained in the shape of $P(w(\mathcal{M})_{d+1})$, and so the entries in diagonal $d$ are weakly less than the entries to their right in diagonal $d+1$. 

The fact that the entries in diagonal $d$ are weakly greater than the entries above them comes from the following observation. Since $w(\mathcal{M})_d$ is a prefix of $w(\mathcal{M})_{d+1}$, we have that $w(\mathcal{M})_{d+1}=w(\mathcal{M})_d u$ for some possibly empty word $u$. We know that $u$ must be weakly increasing, and part $(a)$ of Lemma~\ref{lem:bumpsmoveleft} says that any RSK bumping path travels weakly left. Thus by part $(b)$ of Lemma~\ref{lem:bumpsmoveleft}, any entry that adds to the length of the $i$th row of $P(w(\mathcal{M})_{d})$ for $i>1$ during the insertion of $u$ must be a letter in the $(i-1)$th row of $P(w(\mathcal{M})_{d})$ that is strictly to the right of the $i$th row of $P(w(\mathcal{M})_{d})$. Thus the size of the $i$th row of $P(w(\mathcal{M})_{d+1})$ cannot exceed the size of the $(i-1)$th row of $P(w(\mathcal{M})_{d})$. It follows that the entries in diagonal $d$ are weakly greater than the entries above them in diagonal $d+1$.

%\textcolor{red}{(what I added applies to the case when $i = 1$)} \textcolor{green}{We don't need this case because the first box of diagonal $d+1$ is not above anything in diagonal $d$.}.

If $\mathbf{r}_d$ is directly below $\mathbf{r}_{d+1}$, it follows from Lemma~\ref{lem:rbelow} that the restriction of $P(w(\mathcal{M})_{d})$ to the letters inserted on diagonal $d+1$ gives $P(w(\mathcal{M})_{d+1})$. This means the entries in diagonal $d$ will be weakly greater than the entries directly above them. 

In addition, the letters in $w(\mathcal{M})_{d}$ not inserted on diagonal $d+1$ are inserted in weakly increasing order. Thus by Lemma~\ref{lem:horizontal strip}, these letters form a horizontal strip in the insertion tableau $P(w(\mathcal{M})_{d})$. It follows that the $i$th row of $P(w(\mathcal{M})_{d})$ must be weakly shorter than the $(i-1)$th row of $P(w(\mathcal{M})_{d+1})$, and so entries in diagonal $d$ will be weakly less than the entries to their right in diagonal $d+1$.

It remains to show that the number of nonzero parts in the shape of $P(w(\mathcal{M})_{d})$ is no larger than the number of boxes of $\lambda$ in diagonal $d$. It follows from Theorem~\ref{thm:GreeneKleitman} that the number of parts of $P(w)$ is equal to the minimal number of disjoint non-decreasing sequences in $w$ necessary to partition the word $w$. Consider some diagonal $j-i$ with border box in position $(i,j)$ and consider the word $\widetilde{w}$ obtained by reading the rim hook labels in boxes \[(1,1),(2,1),\ldots,(i,1),(1,2),(2,2),\ldots,(i,2),\ldots,(1,j),\ldots,(i,j).\] For example, the corresponding word for diagonal -1 in Example~\ref{ex:RSKlabels} is $\widetilde{w} = 146257$. This word can clearly be written as the disjoint union of $j$ non-decreasing sequences: $\{(1,1),\ldots,(i,1)\}$, 
$\{(1,2),\ldots,(i,2)\}$,\ldots,$\{(1,j),\ldots,(i,j)\}$. It can also be written as the disjoint union of $i$ non-decreasing sequences: $\{(1,1),(1,2),\ldots,(1,j)\}$, $\{(2,1),(2,2),\ldots,(2,j)\}$, $\{(i,1),(i,2),\ldots,(i,j)\}$. Hence the minimal number of disjoint non-decreasing sequences it takes to partition the word is at most min$(i,j)$.  Now $w(\mathcal{M})_{j-i}$ can be obtained from $\widetilde{w}$ by deleting entries and repeating existing entries, and can thus still be covered by min$(i,j)$ non-decreasing sequences. 
\end{proof}

We develop the inverse procedure in the next section.

\subsection{Diagonal RSK is equivalent to Sulzgruber insertion}

In this section, we will show that diagonal RSK is equivalent to the Sulzgruber insertion by showing that the reverse insertion procedure is the same in both cases. The fact that the two insertion procedures are equivalent may be initially surprising because the entries on diagonal $d$ of a reverse plane partition seemingly cannot be computed using Sulzgruber's insertion without knowing what happens in diagonal $d+1$, while any one diagonal of a reverse plane partition can clearly be computed on its own in diagonal RSK. Recall that we use $*$ for Sulzgruber insertion and $\star$ for diagonal RSK.

\begin{theorem}\label{thm:diagisSulz}
Suppose $h_1\leq h_2\leq \ldots\leq h_k$ are rim hooks in partition $\lambda$. Then \[h_k \star \cdots\star h_2 \star h_1 \star 0=h_k *\cdots * h_2 * h_1 * 0.\] In other words, diagonal RSK coincides with Sulzgruber's insertion.
\end{theorem}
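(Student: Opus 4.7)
The plan is to argue by induction on $k$. The base case $k=0$ is immediate. For the inductive step, I would set $\pi' := h_{k-1}\star\cdots\star h_1\star 0$, which by the induction hypothesis coincides with $h_{k-1}*\cdots*h_1*0$, and let $\pi := h_k\star\pi'$. Since Sulzgruber insertion is a bijection by \cite[Theorem 3]{sulzgruber2016building}, it suffices to show that the Sulzgruber reverse insertion applied to $\pi$ recovers the pair $(h_k,\pi')$; then $\pi=h_k*\pi'$ as desired.

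My first step would be to describe precisely the difference $\pi-\pi'$. Because $h_k$ is the last element of the Sulzgruber-ordered sequence $h_1\leq\cdots\leq h_k$, its label $\ell_k$ occurs as the final letter of $w(\mathcal{M})_d$ on every diagonal $d$ meeting $h_k$, and does not appear in $w(\mathcal{M})_d$ for any other diagonal. Therefore $P(w(\mathcal{M})_d)$ is obtained from $P(w(h_{k-1},\dots,h_1)_d)$ by exactly one RSK row-insertion of $\ell_k$ when $d$ meets $h_k$, and equals it otherwise. Because an RSK row-insertion enlarges the shape by exactly one cell, on each such diagonal the entries of $\pi$ exceed those of $\pi'$ by $1$ at a single cell $c_d$, and $\pi$ agrees with $\pi'$ off the set $P:=\{c_d\}$.

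Next I would show that $P$ has the same cardinality and northeasternmost cell as $h_k$, so that the rim hook reconstructed from $P$ via Sulzgruber's convention is exactly $h_k$. The cardinality follows because $h_k$ meets $|h_k|$ consecutive diagonals, one cell per diagonal. For the northeasternmost cell, I would verify that on the northeast-most diagonal $d^*$ of $h_k$, every rim hook in $\mathcal{M}\setminus\{h_k\}$ whose support meets $d^*$ has label at most $\ell_k$. This is a direct comparison: writing $h_k=h_{(i_k,j_k)}$, the condition of touching $d^*$ forces a rim hook's row index to be at most $i_k$, while preceding $h_k$ in the column-major Sulzgruber insertion order forces its column index to be at most $j_k$. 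It follows that the insertion of $\ell_k$ into $P(w(\mathcal{M}\setminus\{h_k\})_{d^*})$ appends a box at the end of row $1$, so $c_{d^*}$ is the southeasternmost cell of diagonal $d^*$ in $\lambda$, which by the geometry of $\lambda$ is exactly the northeasternmost cell of $h_k$.

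The final and most delicate step will be to check that Sulzgruber reverse insertion actually chooses $P$ as its path when run on $\pi$. I would need to verify that no candidate lies on any diagonal strictly northeast of $d^*$ (using that $\pi=\pi'$ there and that $\pi'$ is itself a reverse plane partition), that $c_{d^*}$ is the first candidate encountered in the scan of diagonal $d^*$, and that Sulzgruber's local path-tracing rules---move up when the above entry equals the current entry, move right otherwise---reproduce the remaining cells of $P$ diagonal by diagonal. The hard part will be this final verification, as it requires compatibly managing the $\mathcal{O}/\mathcal{A}$ categorization of cells, the two geometric cases for consecutive border boxes $\mathbf{r}_d$ and $\mathbf{r}_{d+1}$ captured by Lemmas~\ref{lem:rleft}--\ref{lem:rbelow}, and the precise row-position behavior of the RSK insertion of $\ell_k$. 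Once these compatibilities are settled, subtracting $1$ along $P$ returns $\pi'$ and the reconstructed rim hook is $h_k$, completing the induction.
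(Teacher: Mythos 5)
Your overall strategy---induct on $k$, set $\pi=h_k\star\pi'$, and show that Sulzgruber's reverse insertion applied to $\pi$ recovers $(h_k,\pi')$---is exactly the route the paper takes, and your first two steps are sound: the difference $\pi-\pi'$ is indeed one cell $c_d$ per diagonal of the support of $h_k$, and on the northeasternmost such diagonal $d^*$ the label $\ell_k$ is weakly larger than every other label appearing in $w(\mathcal{M})_{d^*}$, so $c_{d^*}=\mathbf{r}_{d^*}$ is the northeasternmost box of $h_k$. The gap is in your third step, which you correctly identify as the crux: you have the geometry of the reverse insertion backwards. Sulzgruber's scan (northeasternmost diagonal first, each diagonal read southeast to northwest) must find its first candidate at the cell $c_{d_b}$ on the \emph{southwesternmost} diagonal $d_b$ of the support of $h_k$ (the diagonal of $h_k$'s southwesternmost box), and the up/right path then starts at $c_{d_b}$ and \emph{terminates} at $c_{d^*}=\mathbf{r}_{d^*}$. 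Your plan instead proposes to show that $c_{d^*}$ is the first candidate. That statement is generally false: in the paper's $3\times3$ example with last rim hook $h_{(1,2)}$, the first candidate is at $(2,1)$ on diagonal $-1=d_b$, while $\mathbf{r}_{d^*}=(1,3)$ on diagonal $2$ is not even a cell of type $\mathcal{O}$ or $\mathcal{A}$; and tracing the up/right rules from $\mathbf{r}_{d^*}$ would immediately leave the support of $h_k$.

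Relatedly, your proposed justification for the absence of earlier candidates does not work. Being a reverse plane partition does not preclude candidates---every nonzero reverse plane partition has one, which is how reverse insertion proceeds at all---so ``$\pi=\pi'$ there and $\pi'$ is an RPP'' rules out nothing. What must actually be shown is that no candidate occurs on any diagonal strictly northeast of $d_b$, \emph{including} the diagonals inside the support of $h_k$, where $\pi\neq\pi'$. The paper's argument is that candidates can only live on diagonals whose border box is the southwesternmost box of some rim hook (the $\mathcal{O}$/$\mathcal{A}$ diagonals), and that a candidate on such a diagonal $e$ forces the word inserted on $e$ to differ from the word on $e-1$ (respectively forces a row of $P(w(\mathcal{M})_e)$ to exceed the corresponding row one diagonal over); since $h_k$ is maximal in the column-major insertion order, no rim hook whose southwesternmost box lies on a diagonal northeast of $d_b$ has been inserted, so no such difference can arise there. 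Once you relocate the starting candidate to $d_b$ and supply this argument, the remainder of your program---checking that the local up/right rules visit exactly the cells $c_d$, via Lemmas~\ref{lem:rleft}, \ref{lem:rbelow}, \ref{lem:restriction}, and \ref{lem:horizontal strip}---is the right one and is what the paper carries out.
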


\begin{proof} To prove the result, we show that the reverse insertion procedure for $\star$ is the same as that of $*$. We first assume for notational simplicity that $\lambda$ is an $(r+1)\times s$ rectangle. In this case, it is easy to label the rim hooks of $\lambda$, as shown below. However, the following argument may be used for any partition $\lambda$.  

\begin{center}
\ytableausetup{boxsize=.6in}
\begin{ytableau}
1 & 2 & \ldots &  \ldots & s\\
s+1 & s+2 & \ldots & \ldots & 2s\\
\vdots & \ddots & \ddots & \ddots & \vdots\\
rs+1 & rs+2 & \ldots & \ldots & (r+1)s
\end{ytableau}
\end{center}

We briefly review the ideas of Lemmas~\ref{lem:rleft} and~\ref{lem:rbelow} in the case of the $(r+1)\times s$ rectangle. Consider a diagonal whose border box is in the bottom row of $\lambda$ in column $j$ such that the diagonal to its right also has border box in the bottom row of $\lambda$. Suppose the word inserted into this diagonal is $w$. Notice then that the word inserted into the diagonal to its right is $wu$, where $u$ is a possibly empty, nondecreasing word in the alphabet $\alpha s +(j+1)$, where $0\leq\alpha\leq r$. 

Next consider a diagonal $d$ whose border box $\mathbf{r}_d$ is in the rightmost column of $\lambda$ in row $i$ such that the diagonal below it $d-1$ also has $\mathbf{r}_{d-1}$ in the rightmost column of $\lambda$. Then the restriction of $w(\mathcal{M})_{d-1}$ to the alphabet $[1,is]$ gives $w(\mathcal{M})_{d}$. In particular, $P(w(\mathcal{M})_{d-1})_{[1,is]}=P(w(\mathcal{M})_{d})$ by Lemma~\ref{lem:restriction}.
%Suppose the word inserted into this diagonal is $h$. Then the restriction of the word inserted into the diagonal below it to the alphabet $[1,is]$ is $h$. In particular, the Young tableau associated to the \textcolor{blue}{word of the(?)} lower diagonal restricted to $[1,is]$ gives the Young tableau associated to the upper diagonal. 

Suppose we are given a rectangular reverse plane partition $\pi=h_k\star \cdots \star h_2 \star h_1 \star 0 $, where $h_1\leq h_2\leq \ldots\leq h_k$ are rim hooks in partition $\lambda$. Let $\mathcal{M}=\{h_1,\ldots,h_k\}$. We wish to recover $h_k$ and $h_{k-1}\star\cdots \star h_2 \star h_1 \star 0 $.

Let $\ell$ denote the label of $h_k$, and suppose rim hook $h_k$ has southwesternmost box in diagonal $d_b$. Let $b$ denote the box in diagonal $d_b$ of $\pi$ corresponding to the row of $P(w(\mathcal{M})_{d_b})$ where the insertion of $\ell$ terminated. (In other words, if  $P(w(\{h_1,\ldots,h_{k-1},h_k\})_{d_b})/P(w(\{h_1,\ldots,h_{k-1}\})_{d_b})$ is a single box in row $r$, then $b$ is the $r$th box from the bottom of diagonal $d_b$ in $\pi$.)

Notice that the entry in box $b$ must be strictly larger than the entry directly to its left. Indeed, since this entry to the left of $b$ was not affected by the insertion of $h_k$, it must be at most one less than the entry in $b$ in order for $h_{k-1} \star \cdots \star h_2 \star h_1 \star 0 $ to have been a reverse plane partition. (Recall here that if there is no box to the left of $b$, we consider the entry to its left to be 0. Similarly, if there is no box above $b$, we consider the entry above $b$ to be 0.)

In addition, as explained in Section~\ref{sec:SulzReverse}, box $b$ must be one of two types. Either $d_b$ contains the unique outer corner of $\lambda$ or $d_b$ contains a border box to the left of the unique outer corner. In the first case, we again say $b$ is of type $\mathcal{O}$, while in the latter case, we say $b$ is of type $\mathcal{A}$. 

If $b$ is of type $\mathcal{A}$, then the entry in box $b$ of $\pi$ must also be strictly greater than the entry directly above it. To see why this is true, suppose the border box $\mathbf{r}_{d_b}$ is in position $(i,j)$ in $\lambda$. Since $h_k$ was the largest rim hook inserted, none of the rim hooks with labels $\alpha s + k$ for $k>j$ and $0\leq\alpha\leq r$ (i.e., the rim hooks whose defining boxes are strictly right of column $j$) have been inserted. Thus $w(\mathcal{M})_{d_b}=w(\mathcal{M})_{d_b+1}$. This implies the insertion of $\ell$ in diagonal $d_b+1$ (the diagonal to the right of $d_b$) terminated in the same row of the insertion tableau as the insertion of $\ell$ in diagonal $d_b$. Thus the insertion of $h_k$ into the reverse plane partition had the effect of adding 1 to $b$ in diagonal $d_b$ and the box directly right of $b$ in diagonal $d_b+1$ but not changing the entry in the box directly above $b$. Thus in $h_{k-1}\star\cdots \star h_2 \star h_1 \star 0 $, box $b$ will have entry one smaller while the box directly above $b$ will have the same entry. It follows that the entry in the box directly above $b$ must be strictly smaller in $\pi$ in order for $h_{k-1}\star\cdots \star h_2 \star h_1 \star 0 $ to be a reverse plane partition. 

We have shown that box $b$ must fit into one of the following two categories:
\begin{enumerate}
\item[$(i)$] box $b$ is of type $\mathcal{O}$ and its entry is strictly greater than the entry to its left in $\pi$, or
\item[$(ii)$] box $b$ is of type $\mathcal{A}$ and its entry is strictly greater than the entry to its left and the entry above it in $\pi$. 
\end{enumerate}
Following Sulzguber's terminology, we call such a box $b$ of $\pi$ a \textit{candidate}.

To begin the reverse insertion of $h_k$, scan the reverse plane partition $\pi$ southeast to northwest along each diagonal starting with the northeasternmost diagonal and locate the first candidate $b$. We next argue that this first candidate $b$ is in diagonal $d_b$ (the southwesternmost diagonal where $h_k$ has support) and is located in the box of diagonal $d_b$ corresponding to the row where the insertion of $\ell$ into $P(w(\{h_1,\ldots,h_{k-1}\})_{d_b})$ ended.

The fact that the entry in box $b$ is strictly greater than the entry to its left means that we have inserted a different word into $d_b$ than we have inserted into $d_b-1$. Suppose the border box $\mathbf{r}_{d_b}$ is in column $j$. This implies we have inserted a nonempty submultiset of the rim hooks $j,s+j,\ldots,rs+j$. The fact that $b$ is the southeasternmost candidate in its diagonal implies that it is the place where the insertion of the largest (in insertion order) of this submultiset terminated by part $(b)$ of Lemma~\ref{lem:bumpsmoveleft}.

We next define a lattice path in $\lambda$ starting at $b$, ending at a box in column $s$ of $\lambda$, and taking upward and rightward steps in $\lambda$ as follows. Suppose we have constructed the path up to some box $b'$.
\begin{itemize}
\item If the entry in $b'$ is equal to the entry above $b'$, the next box in the path is the box above $b'$.
\item If the entry in $b'$ is not equal to the entry above $b'$ and there is a box to the right of $b'$, the next box in the path is the box to the right of $b'$.
\item If the entry in $b'$ is not equal to the entry above $b'$ and there is not a box to the of $b'$, the path terminates at $b'$.
\end{itemize}

We claim that this path exactly identifies the boxes where the insertion of $\ell$ terminated in each relevant diagonal. In other words, subtracting one from each of the entries of the boxes of $\pi$ in the path yields $h_{k-1}\star\cdots \star h_2 \star h_1 \star 0 $ and the boxes in the path uniquely identify $h_k$. 

%We first show that the path defined by inserting any hook can be traveled moving either upward one box or rightward one box at each step. First, since a rim hook covers an interval of diagonals, it is clear that we must include exactly one box from each of these diagonals in such a path. Suppose the insertion of $\ell$ terminated in box $b'$ in some diagonal. 

If the entry in $b'$ is equal to the entry above $b'$, we must include the box in $b'$ in the path in order for $h_{k-1}\star\cdots \star h_2 \star h_1 \star 0 $ to be a reverse plane partition, which we know it is. Therefore the next box in the path is the box above $b'$.

If the entry in $b'$ is not equal to the entry above $b'$ and there is a box to the right of $b'$, we show why the insertion of $\ell$ into diagonal $d_{b'}+1$ must have terminated in the box to the right of $b'$. Suppose first that $\mathbf{r}_{d_{b'}}$ is in the rightmost column. The fact that the entry in the box directly above $b'$ is strictly smaller means that the box from which we begin the reverse insertion of $\ell$ in $P(w(\mathcal{M})_{d_{b'}})$ does not exist in $P(w(\mathcal{M})_{d_{{b'}+1}})$. This means that the entry in this box of $P(w(\mathcal{M})_{d_{b'}})$ is strictly greater than all entries in $P(w(\mathcal{M})_{d_{b'}+1})$. Reverse inserting $\ell$ from $P(w(\mathcal{M})_{d_{b'}})$ and restricting to the alphabet of $d_{b'}+1$ gives the result of reverse inserting $\ell$ from $P(w(\mathcal{M})_{d_{b'}+1})$. Since the entry in the box that starts the reverse insertion in diagonal $d_b'$ is strictly greater than all entries in the tableau for the diagonal above, performing the reverse insertion and restricting will not change the size of the row corresponding to the box above $b'$. Thus the path does not travel upward.

To show we go right in the path, we must show that the number that bumped the entry that ended up in the row of $P(w(\mathcal{M})_{d_{b'}})$ corresponding to $b'$ was also inserted into diagonal $d_{b'}+1$. This means that the insertion terminated one row higher in diagonal $d_{b'}+1$, which means the path should continue to the right. This follows from the fact that the letters inserted into diagonal $d_b'$ but not inserted into diagonal $d_{b'+1}$ form a horizontal strip in $P(w(\mathcal{M})_{d_{b'}})$ with weakly increasing reading word, as shown in the proof of Proposition~\ref{prop:RSKgivesRPP}. Thus they may not bump each other. 

Now if border box $\mathbf{r}_{d_{b'}}$ is in the last row of $\lambda$ but not the last column of $\lambda$, the same argument that we used to show that candidates of type $\mathcal{A}$ must be strictly larger than the entry above them shows that we do not travel upward in our path. Since the word inserted into diagonal $d_{b'}$ is the same as the word inserted into the diagonal to its right, $d_{b'}+1$, it is clear that the insertion of $\ell$ terminated in the same row, and thus we go right in the path.

Once the path is determined, we can identify $h_k$ as the rim hook whose northeasternmost box is the box at the end of the path and that contains the same number of boxes as the path. It is clear that this reverse insertion procedure is the same as that for Sulzgruber's $*$ insertion, and thus the two insertions agree for the rectangle.

It is straightforward to see that this proof can be easily adapted to any shape $\lambda$ using the same arguments (but with less convenient notation) to show the same reverse insertion procedure holds for any shape $\lambda$.
\end{proof}

In the next section, we review the analogous result for Hillman--Grassl, which can be found for example in \cite{morales2015hook}. In this case, the Greene--Kleitman invariant for Hillman--Grassl was known, and the explicit RSK interpretation is obtained as a corollary. 

\section{Hillman--Grassl as RSK}\label{sec:HillmanGrasslRSK}
One can describe the Hillman--Grassl correspondence similarly using a variant of the diagonal RSK defined above. We next remind the reader of such a description, and we give it in the same spirit as diagonal RSK. We will refer to it  as \textit{HG diagonal RSK}. We present this description of Hillman--Grassl to emphasize the fact that both Hillman--Grassl and Sulzgruber insertion are determined by a linear ordering of the cells of the Young diagram (the insertion order) along with a compatible labeling of rim hooks. 

To obtain the Hillman--Grassl insertion order for shape $\lambda$, label the cells of $\lambda$ top to bottom along columns starting with the rightmost column. The labeling for the rim hooks is obtained by labeling the cells of $\lambda$ right to left along rows starting with the top row. See Figure~\ref{fig:HGRSKorder}, where we see, for example, that inside $\lambda=(3,3,2)$, rim hook $h_{(2,1)}$ is seventh in the insertion order and has label 6.
\begin{figure}[h!]
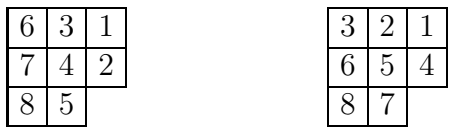

\ytableausetup{boxsize=.2in}
\begin{ytableau}
6 & 3 & 1 \\
7 & 4 & 2 \\
8 & 5
\end{ytableau}\hspace{1in}
\begin{ytableau}
3 & 2 & 1 \\
6 & 5 & 4 \\
8 & 7
\end{ytableau}
\caption{HG diagonal RSK insertion order (left) and rim hook labeling (right).}\label{fig:HGRSKorder}
\end{figure}

Given a multiset of rim hooks $\mathcal{M}$ of $\lambda$, we can again form a word corresponding to each diagonal of $\lambda$ in an analogous way:
\begin{enumerate}
\item First, order the multiset $\mathcal{M}$ by insertion order. 
\item Create a word $w_{HG}(\mathcal{M})$ by recording the corresponding label for each rim hook in the ordered multiset $\mathcal{M}$. 
\item Obtain word $w_{HG}(\mathcal{M})_i$ corresponding to diagonal $i$ by restricting $w_{HG}(\mathcal{M})$ to the labels of rim hooks with support on diagonal $i$. 
\end{enumerate}
As before, we then perform RSK insertion with each word $w_{HG}(\mathcal{M})_i$, and we form the reverse plane partition corresponding to $\mathcal{M}$ by recording the row sizes of the insertion tableau $P(w_{HG}(\mathcal{M})_i)$ on diagonal $i$. 

We now state the Greene--Kleitman invariants for HG diagonal RSK, which will lead to the proof that HG diagonal RSK is equivalent to Hillman--Grassl. Let $B=(b_{ij})$ be a nonnegative integer matrix. An $A_{HG}$-chain in the matrix $B$ is a sequence $V=((i_1,j_1),\ldots,(i_k,j_k))$ of positions in $B$ where $i_1\leq i_2\leq\cdots\leq i_k$ and $j_1\geq j_2\geq\cdots\geq j_k$, where $(i,j)$ can be used at most $b_{ij}$ times. Let $|V|$ denote the cardinality of a $A_{HG}$-chain. Define $a_k^{HG}(B)$ to be the maximum of $|V_1|+\cdots+|V_k|$, where the maximum is over all collections of $k$ $A_{HG}$-chains such that $(i,j)$ is used at most $b_{ij}$ times in the collection and $|V_i|$ denotes the cardinality of $V_i$. Let $a_0^{HG}(B)=0$

Similarly, define a $C_{HG}$-chain in the matrix $B$ to be a sequence $U=((i_1,j_1),\ldots,(i_k,j_k))$ of \textit{distinct} positions in $B$ where $i_1< i_2<\cdots < i_k$ and $j_1< j_2<\cdots< j_k$, where $(i,j)$ can be used only if $b_{ij}\neq 0$. Define $c_k^{HG}(B)$ to be the maximum of $|U_1|+\cdots+|U_k|$, where the maximum is over all collections of $k$ $C_{HG}$-chains such that $(i,j)$ is used at most once in the collection and $|U_i|$ denotes the cardinality of $U_i$. Let $c_0^{HG}(B)=0.$

\begin{remark}
A $C_{HG}$-chain is classically referred to as a $D$-chain, where the ``$D$'' stands for ``descending''. Since we use 
``$d$'' to denote a diagonal of a Young diagram, we use the $C_{HG}$-chain (and $C_S$-chain in the next section) terminology to avoid confusion.
\end{remark}

Given a partition $\lambda$ and a multiset $\mathcal{M}$ of rim hooks of $\lambda$, form an $\mathbb{N}$-tableau $T_\mathcal{M}$ by entering in each box of $\lambda$ the number of times the corresponding rim hook appears in $\mathcal{M}$. For each diagonal $d=j-i$ of $\lambda$, define the matrix $B_d$ to be the $i\times j$-matrix obtained by restricting $T^{HG}_\mathcal{M}$ to the boxes weakly northwest of $\mathbf{r}_d$. 

\begin{theorem}\label{thm:HGGK} Let $\lambda$ be a partition, $\mathcal{M}$ be a multiset of rim hooks of $\lambda$. Let $\pi$ denote the reverse plane partition obtained from $\mathcal{M}$ using HG diagonal RSK, and let $\mu^d$ be the partition obtained by reading the entries of $\pi$ in diagonal $d$. Then
\begin{eqnarray*}
a_k^{HG}(B_d)&=&\mu^d_1+\mu^d_2+\cdots+\mu^d_k\text{  and } \\
c_k^{HG}(B_d)&=&(\mu^d)'_1+(\mu^d)'_2+\cdots+(\mu^d)'_k.\end{eqnarray*} In other words,
\begin{eqnarray*}
\mu^d_i&=&a_i^{HG}(B_d)-a_{i-1}^{HG}(B_d)\text{  and }\\
(\mu^d)'_i&=&c_i^{HG}(B_d)-c_{i-1}^{HG}(B_d)
\end{eqnarray*}
\end{theorem}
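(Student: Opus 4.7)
The plan is to apply the classical Greene--Kleitman theorem (Theorem~\ref{thm:GreeneKleitman}) to the word $w_{HG}(\mathcal{M})_d$. By the definition of HG diagonal RSK, $\mu^d$ is the shape of $P(w_{HG}(\mathcal{M})_d)$, so Theorem~\ref{thm:GreeneKleitman} expresses $\mu^d_i = \delta_i - \delta_{i-1}$ and $(\mu^d)'_i = \gamma_i - \gamma_{i-1}$, where $\delta_k$ and $\gamma_k$ are the maximum total cardinalities of $k$ disjoint nondecreasing (resp.\ strictly decreasing) subwords of $w_{HG}(\mathcal{M})_d$. It therefore suffices to prove $\delta_k(w_{HG}(\mathcal{M})_d) = a_k^{HG}(B_d)$ and $\gamma_k(w_{HG}(\mathcal{M})_d) = c_k^{HG}(B_d)$.

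First I would describe $w_{HG}(\mathcal{M})_d$ explicitly. The rim hooks whose support meets diagonal $d$ are exactly those $h_{(i,j)}$ with $(i,j) \preceq \mathbf{r}_d$, and these cells fill an $i_d \times j_d$ rectangle. Each cell $(i,j)$ contributes $b_{ij}$ consecutive copies of its label $\ell_{i,j}$ to the word, with cells appearing in HG insertion order (columns right-to-left, and within each column top-to-bottom). A direct check on the definition of the labeling shows that $\ell_{i,j} < \ell_{i',j'}$ iff either $i < i'$ or ($i = i'$ and $j > j'$).

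The central step is the identification of nondecreasing subwords of $w_{HG}(\mathcal{M})_d$ with $A_{HG}$-chains in $B_d$. A multi-sequence of positions $((i_1,j_1),\ldots,(i_r,j_r))$ occurs as a subword iff the insertion order is respected, which amounts to $j_1 \geq \cdots \geq j_r$ with any equality $j_s = j_{s+1}$ requiring $i_s \leq i_{s+1}$; the corresponding labels are nondecreasing iff $i_1 \leq \cdots \leq i_r$ with any equality $i_s = i_{s+1}$ requiring $j_s \geq j_{s+1}$. Imposing both gives exactly $i_1 \leq \cdots \leq i_r$ and $j_1 \geq \cdots \geq j_r$, which is the definition of an $A_{HG}$-chain. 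The constraint that cell $(i,j)$ appears at most $b_{ij}$ times across the collection translates verbatim to the fact that there are exactly $b_{ij}$ copies of $\ell_{i,j}$ in the word and $k$ disjoint subwords use each copy at most once. Hence maximising over $k$ disjoint nondecreasing subwords is the same as maximising over collections of $k$ $A_{HG}$-chains, giving $\delta_k(w_{HG}(\mathcal{M})_d) = a_k^{HG}(B_d)$.

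An analogous argument handles $\gamma_k$: combining the insertion order with strict decrease of labels forces both coordinates to strictly decrease along the subword, and reversing the resulting sequence yields a sequence of distinct positions strictly increasing in both coordinates, that is, a $C_{HG}$-chain; the multiplicity bounds again match because a strictly decreasing subword uses each label at most once. Substituting the resulting equalities $\delta_k = a_k^{HG}(B_d)$ and $\gamma_k = c_k^{HG}(B_d)$ into the formulas from Theorem~\ref{thm:GreeneKleitman} proves the theorem. The main delicate point is the dictionary in the $A_{HG}$-case: the HG insertion order and the rim hook labeling are calibrated precisely so that the two monotonicity conditions (word order and label order) combine to carve out the $A_{HG}$-chain definition, and once this translation is verified the rest of the argument is essentially automatic.
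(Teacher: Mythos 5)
Your proposal is correct and follows exactly the paper's route: apply Theorem~\ref{thm:GreeneKleitman} to $w_{HG}(\mathcal{M})_d$ and identify nondecreasing (resp.\ strictly decreasing) subwords with $A_{HG}$-chains (resp.\ $C_{HG}$-chains) in $B_d$. The only difference is that you spell out the dictionary between the insertion order, the labeling, and the chain conditions, which the paper asserts without detail.
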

\begin{proof}
The result follows immediately from Theorem~\ref{thm:GreeneKleitman} and the fact that $A_{HG}$-chains (resp., $C_{HG}$-chains) in $B_d$ correspond exactly to nondecreasing (resp., decreasing) sequences in $w_{HG}(\mathcal{M})_d$.
\end{proof}

The following corollary is well known and can be found, for example, in \cite{morales2015hook}.

\begin{corollary}
The Hillman--Grassl correspondence is equivalent to HG diagonal RSK.
\end{corollary}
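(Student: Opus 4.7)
The plan is to deduce the corollary by comparing the diagonal entries of the two reverse plane partitions that the two correspondences produce from the same multiset $\mathcal{M}$, using the Greene--Kleitman description on each side. Fix a partition $\lambda$, a multiset $\mathcal{M}$ of rim hooks of $\lambda$, and the nonnegative integer tableau $T_\mathcal{M}$. Let $\pi^{HG}$ denote the reverse plane partition produced from $\mathcal{M}$ by the classical Hillman--Grassl correspondence, and let $\pi^{\star}$ denote the reverse plane partition produced by HG diagonal RSK. Since a reverse plane partition is determined by its entries, it suffices to show that for every diagonal $d$ of $\lambda$, the multiset of entries of $\pi^{HG}$ on diagonal $d$ and the multiset of entries of $\pi^{\star}$ on diagonal $d$ coincide.

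For the HG diagonal RSK side, this is exactly the content of Theorem~\ref{thm:HGGK}: the partition $\mu^d$ obtained by reading the entries of $\pi^{\star}$ along diagonal $d$ satisfies $\mu^d_i = a_i^{HG}(B_d) - a_{i-1}^{HG}(B_d)$ and $(\mu^d)'_i = c_i^{HG}(B_d) - c_{i-1}^{HG}(B_d)$, where $B_d$ is the restriction of $T_\mathcal{M}$ to the boxes weakly northwest of $\mathbf{r}_d$. For the Hillman--Grassl side, Gansner's theorem from \cite{gansner1981hillman} establishes precisely the same formula: the diagonal entries of $\pi^{HG}$ on diagonal $d$ form a partition whose part sizes and conjugate part sizes are computed by the same chain statistics $a_k^{HG}(B_d)$ and $c_k^{HG}(B_d)$. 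Combining the two descriptions yields $\pi^{HG} = \pi^{\star}$ diagonal by diagonal, hence as reverse plane partitions, and since this holds for every $\mathcal{M}$, the two bijections are the same map.

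The main obstacle is essentially a matter of matching conventions: one must check that the $A_{HG}$-chains and $C_{HG}$-chains as defined in the excerpt correspond precisely to the monotone chains Gansner uses in his statement of the Greene--Kleitman invariants for Hillman--Grassl. This is a direct translation once one observes that the matrix $B_d$ is indexed by the same set of boxes that Gansner indexes, and that his weakly increasing (respectively strictly decreasing) chain statistic, evaluated with the same multiplicity conventions on repeated positions, is the same quantity as $a_k^{HG}(B_d)$ (respectively $c_k^{HG}(B_d)$). No further insertion argument is needed; the corollary is, in the spirit of Gansner's original work, a consequence of the fact that a bijection between $\mathbb{N}$-tableaux and reverse plane partitions is uniquely determined by its diagonal Greene--Kleitman invariants.
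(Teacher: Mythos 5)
Your proposal is correct and follows essentially the same route as the paper: the paper's proof likewise just cites Gansner's result that Hillman--Grassl has the Greene--Kleitman invariants of Theorem~\ref{thm:HGGK} and concludes that the two maps agree diagonal by diagonal. Your additional remarks on matching Gansner's chain conventions and on a reverse plane partition being determined by its diagonal partitions only make explicit what the paper leaves implicit.
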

\begin{proof}
As stated in \cite{gansner1977matrix, gansner1981hillman}, the Hillman--Grassl correspondence has the same Greene--Kleitman invariants as given in Theorem~\ref{thm:HGGK}.
\end{proof}

\section{Greene--Kleitman invariants for Sulzgruber insertion}\label{sec:GK}
We now state the Greene--Kleitman invariants for Sulzgruber insertion in the same spirit as those stated for for Hillman--Grassl by Gansner in \cite{gansner1977matrix, gansner1981hillman} and for HG diagonal RSK in Theorem~\ref{thm:HGGK}. 

Let $B=(b_{ij})$ be a nonnegative integer matrix. An $A_S$-chain in the matrix $B$ is a sequence $W=((i_1,j_1),\ldots,(i_k,j_k))$ of positions in $B$ where $i_1\leq i_2\leq\cdots\leq i_k$ and $j_1\leq j_2\leq\cdots\leq j_k$, where $(i,j)$ can be used at most $b_{ij}$ times. Let $|W|$ denote the cardinality of a $A_S$-chain. Define $a_k^S(B)$ to be the maximum of $|W_1|+\cdots+|W_k|$, where the maximum is over all collections of $k$ $A_S$-chains such that $(i,j)$ is used at most $b_{ij}$ times in the collection and $|W_i|$ denotes the cardinality of $W_i$. Let $a_0^S(B)=0.$

Similarly, define a $C_S$-chain in the matrix $B$ to be a sequence $R=((i_1,j_1),\ldots,(i_k,j_k))$ of \textit{distinct} positions in $B$ where $i_1> i_2>\cdots > i_k$ and $j_1< j_2<\cdots< j_k$, where $(i,j)$ can be used only if $b_{ij}\neq 0$. Define $c_k^S(B)$ to be the maximum of $|R_1|+\cdots+|R_k|$, where the maximum is over all collections of $k$ $C_S$-chains such that $(i,j)$ is used at most once in the collection and $|R_i|$ denotes the cardinality of $R_i$. Let $c_0^S(B)=0$

Given a partition $\lambda$ and a multiset $\mathcal{M}$ of rim hooks of $\lambda$, we again form an $\mathbb{N}$-tableau $T_\mathcal{M}$ by entering in each box of $\lambda$ the number of times the corresponding rim hook appears in $\mathcal{M}$. For each diagonal $d=j-i$ of $\lambda$, define the matrix $B_d$ to be the $i\times j$ matrix obtained by restricting $T_\mathcal{M}$ to the boxes weakly northwest of $\mathbf{r}_d$. 

\begin{theorem} Let $\lambda$ be a partition, $\mathcal{M}$ be a multiset of rim hooks of $\lambda$. Let $\pi$ denote the reverse plane partition obtained from $\mathcal{M}$ using Sulzgruber insertion, and let $\mu^d$ be the partition obtained by reading the entries of $\pi$ in diagonal $d$. Then
\begin{eqnarray*}
a_k^S(B_d)&=&\mu^d_1+\mu^d_2+\cdots+\mu^d_k\text{  and } \\
c_k^S(B_d)&=&(\mu^d)'_1+(\mu^d)'_2+\cdots+(\mu^d)'_k.\end{eqnarray*} In other words,
\begin{eqnarray*}
\mu^d_i&=&a_i^S(B_d)-a_{i-1}^S(B_d)\text{  and }\\
(\mu^d)'_i&=&c_i^S(B_d)-c_{i-1}^S(B_d)
\end{eqnarray*}
\end{theorem}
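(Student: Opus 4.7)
The plan is to reduce to Theorem~\ref{thm:GreeneKleitman} (Greene--Kleitman for RSK) via Theorem~\ref{thm:diagisSulz} (diagonal RSK equals Sulzgruber insertion). By Theorem~\ref{thm:diagisSulz}, the $d$th diagonal of $\pi$ records the row lengths of the insertion tableau $P(w(\mathcal{M})_d)$, so $\mu^d$ is literally its shape. Theorem~\ref{thm:GreeneKleitman} then yields
\[
\mu^d_1+\cdots+\mu^d_k=\delta_k(w(\mathcal{M})_d),\qquad (\mu^d)'_1+\cdots+(\mu^d)'_k=\gamma_k(w(\mathcal{M})_d),
\]
where $\delta_k$ (resp.\ $\gamma_k$) is the maximum cardinality of a disjoint union of $k$ nondecreasing (resp.\ strictly decreasing) subwords. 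Thus the theorem reduces to proving the purely combinatorial identities $\delta_k(w(\mathcal{M})_d)=a_k^S(B_d)$ and $\gamma_k(w(\mathcal{M})_d)=c_k^S(B_d)$.

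I will establish these by exhibiting bijections between $k$-tuples of disjoint monotone subwords of $w(\mathcal{M})_d$ and $k$-tuples of $A_S$- or $C_S$-chains in $B_d$. Each letter of $w(\mathcal{M})_d$ is an occurrence of some rim hook $h_{(i,j)}$, and sending that letter to the position $(i,j)$ is the candidate map. A short geometric check shows that $h_{(i,j)}$ meets diagonal $d$ exactly when $(i,j)$ lies weakly northwest of $\mathbf{r}_d$: if $\mathbf{r}_d=(r,s)$, then $i\le r$ forces $\lambda_i-i\ge d$ and $j\le s$ forces $j-\lambda'_j\le d$, so $h_{(i,j)}$ does cover $d$, and the converse is equally direct. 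Hence the positions of the chains in $B_d$ match the letters available in the word, with multiplicity $b_{ij}$.

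The substantive content, and main potential obstacle, is a short case analysis comparing Sulzgruber's \emph{row-wise} label with his \emph{column-wise} insertion order. The label satisfies $\mathrm{label}(h_{(i_1,j_1)})\le \mathrm{label}(h_{(i_2,j_2)})$ iff $i_1<i_2$, or $i_1=i_2$ and $j_1\le j_2$; and $h_{(i_1,j_1)}$ precedes $h_{(i_2,j_2)}$ in $w(\mathcal{M})_d$ iff $j_1<j_2$, or $j_1=j_2$ and $i_1<i_2$. Intersecting the two orderings, two distinct rim hooks can appear consecutively in a nondecreasing subword precisely when $i_1\le i_2$ and $j_1\le j_2$ (the $A_S$-step), and consecutively in a strictly decreasing subword precisely when $i_1>i_2$ and $j_1<j_2$ (the $C_S$-step). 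Because the $b_{ij}$ occurrences of a single rim hook appear consecutively in $w(\mathcal{M})_d$ with equal labels, they may be freely distributed among the $k$ nondecreasing subwords, matching the $A_S$ rule that $(i,j)$ is used at most $b_{ij}$ times in the collection, whereas at most one such occurrence can enter any strictly decreasing subword, matching the distinct-positions requirement of $C_S$-chains. Taking the bijection on each of the $k$ components simultaneously and then taking maxima delivers $\delta_k(w(\mathcal{M})_d)=a_k^S(B_d)$ and $\gamma_k(w(\mathcal{M})_d)=c_k^S(B_d)$, and the theorem follows. No difficulty is expected beyond careful bookkeeping in the case analysis.
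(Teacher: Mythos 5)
Your proposal is correct and follows exactly the paper's route: reduce via Theorem~\ref{thm:diagisSulz} and Theorem~\ref{thm:GreeneKleitman}, then identify nondecreasing (resp.\ strictly decreasing) subwords of $w(\mathcal{M})_d$ with $A_S$-chains (resp.\ $C_S$-chains) in $B_d$. The only difference is that the paper asserts this last correspondence as a "fact" while you verify it with the explicit case analysis of the row-wise labels against the column-wise insertion order, which is a welcome amplification rather than a departure.
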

\begin{proof}
The result follows immediately from Theorem~\ref{thm:GreeneKleitman}, Theorem~\ref{thm:diagisSulz}, and the fact that $A_S$-chains (resp., $C_S$-chains) in $B_d$ correspond exactly to nondecreasing (resp., decreasing) sequences in $w(\mathcal{M})_d$.
\end{proof}

\section*{Acknowledgements}
We thank Hugh Thomas for getting us started on this problem and Robin Sulzgruber for helpful exchanges. Rebecca Patrias received support NSERC, CRM-ISM, and the Canada Research Chairs Program. Alexander Garver also received support from NSERC and the Canada Research Chairs Program.

\bibliographystyle{plain}
\bibliography{main.bib}

\end{document}